\theoremstyle{plain}
\newtheorem{theorem}{Theorem}
\newtheorem{lemma}{Lemma}
\newtheorem{corollary}{Corollary}
\newtheorem{proposition}{Proposition}
\theoremstyle{definition}
\theoremstyle{remark}
\newtheorem{remark}{Remark}
\newcommand{\Z}{\ensuremath{\mathbb{Z}}}   
\newcommand{\Hom}{\operatorname{Hom}}
\newcommand{\Ker}{\operatorname{{ \it ker}}}
\newcommand{\Rad}{\operatorname{{\it rad}}}
\newcommand{\Soc}{\operatorname{{\it soc}}}
\newcommand{\ds}{\subseteq ^{\oplus}}
\numberwithin{equation}{section} 
\begin{document}
\title[Simple-direct modules]{On  Simple-Direct  Modules}





\author{ENG\.{I}N B\"uy\"uka\c{s}{\i}k}

\address{Izmir Institute of Technology \\ Department of Mathematics\\ 35430 \\ Urla, \.{I}zmir\\ Turkey}

\email{enginbuyukasik@iyte.edu.tr}

\author{\"{O}zlem Demir}

\address{Izmir Institute of Technology \\ Department of Mathematics\\ 35430 \\ Urla, \.{I}zmir\\ Turkey}
\email{ozlemirmak@iyte.edu.tr}

\author{M\"{u}ge Diril}

\address{Izmir Institute of Technology \\ Department of Mathematics\\ 35430 \\ Urla, \.{I}zmir\\ Turkey}

\email{mugediril@iyte.edu.tr}

\begin{abstract} Recently, in a series of papers ``simple" versions of direct-injective and direct-projective modules have been investigated (see, \cite{sdi}, \cite{sdp1}, \cite{sdp2}). These modules are termed as ``simple-direct-injective" and ``simple-direct-projective", respectively.
In this paper, we give a complete characterization of the aforementioned modules over the ring of integers and over semilocal rings. The ring is semilocal if and only if every right module with  zero Jacobson radical is simple-direct-projective.  The rings whose simple-direct-injective right modules are simple-direct-projective are fully characterized. These are exactly the left perfect right $H$-rings. The rings whose simple-direct-projective right modules are simple-direct-injective are right max-rings. For a commutative Noetherian ring, we prove that simple-direct-projective modules are simple-direct-injective if and only if simple-direct-injective modules are simple-direct-projective if and only if the ring is Artinian. Various closure properties and some classes of modules that are  simple-direct-injective (resp. projective) are given.

\end{abstract}

\subjclass[2010]{16D50, 16D60, 18G25}

\keywords{Simple-direct-projective, simple-direct-injective, torsion submodule, perfect rings, H-rings, Artinian rings.}

\maketitle

\section{Introduction}

In \cite{nicholson1976}, a right module is called {\it direct-injective} if every submodule isomorphic to a summand is a summand. Direct-injective modules are also known as $C2$-modules. A right module is a $C3$-module if the sum of any two summands with zero intersection is again a summand. These modules and several generalizations are studied extensively in the literature. Recently, the "simple" version of $C2$ and $C3$-modules are studied in \cite{sdi}. Namely, a right module is called {\it simple-direct-injective} if every simple submodule isomorphic to summand is itself a summand, or equivalently if the sum of any two simple summands with zero intersection is again a summand (see, \cite{sdi}).

Dual to direct-injective modules, a right  module $M$ is called {\it direct-projective, or $D2$-module} if, for every submodule $A \subseteq M$ with $ \frac{M}{A}$ isomorphic to a summand of $M$, then $A$ is a direct summand of $M$ (see \cite{nicholson1976}).  In \cite{sdp1} and \cite{sdp2} the authors investigate and study a dual notion of simple-direct-injective modules. A right module $M$ is called \emph{simple-direct-projective} if, whenever $A$ and $B$ are submodules of $M$ with $B$ simple and $ \frac{M}{A}\cong B \ds M$, then $A \ds M$.
Some well known classes of rings and modules are characterized in terms of simple-direct-injective and simple-direct-projective modules (see, \cite{sdi}, \cite{sdp1}, \cite{sdp2}).

In this paper,  we characterize  simple-direct-injective and simple-direct-projective modules over the ring of integers and over semilocal rings. We show that, the ring is semilocal iff every right module with zero Jacobson radical is simple-direct-projective. We prove that the rings whose simple-direct-injective right modules are simple-direct-projective are exactly the left perfect right $H$-rings.  We show that, the rings whose simple-direct-projective modules are simple-direct-injective are right max-rings. For a commutative Noetherian ring, we prove that,  simple-direct-projective modules are simple-direct-injective iff simple-direct-injective modules are simple-direct-projective iff the ring is Artinian.

The paper is organized as follows.

In section 2, we characterize simple-direct-projective abelian groups (Theorem \ref{thm:sdpAbelianGroups}).
 As a byproduct a characterization of simple-direct-projective modules over local and local perfect rings is obtained. We prove that the ring is semilocal if and only if every right module with zero Jacobson radical is simple-direct-projective.

In section 3, a complete characterization of simple-direct-injective abelian groups is given (Theorem \ref{lemma:sdiabeliangroups}). Motivated by the fact that nonsingular right modules are simple-direct-projective over any ring, we prove the corresponding result for simple-direct-injective modules. We show that, nonsingular right modules are simple-direct-injective iff projective simple right modules are injective. We also give a characterization of simple-direct-injective modules over semilocal rings. We show that  simple-direct-injective modules are closed under coclosed submodules over any ring, and closed under pure submodules provided the ring is commutative. Partial converses of these results are given.

Following \cite[sec. 4.4]{injectivemodules}, we say $R$ is a {\it right $H$-ring} if for nonisomorphic simple right $R$-modules $S_1$ and $S_2$, $\Hom_R(E(S_1), E(S_2)) = 0$. Commutative Noetherian rings, and commutative semiartinian rings are $H$-ring by \cite[Proposition 4.21]{injectivemodules} and \cite[Proposition 2]{camillohrings}, respectively. Right Artinian rings that are right $H$-rings are characterized in \cite[Theorem 9]{papp}. Some classes of noncommutative $H$-rings are also studied in \cite{Golan}. A ring $R$ is called {\it right max-ring} if every nonzero right $R$-module has a maximal submodule.

In \cite[Theorem 3.4.]{sdi}, the authors characterize the rings over which simple-direct-injective right modules are $C3$. They prove that these rings are exactly the Artinian serial rings with $J^2(R)=0$. In \cite[Theorem 4.9.]{sdp1}, the authors prove that every simple-direct-injective right $R$-module is $D3$ iff every simple-direct-projective right $R$-module is $C3$ iff $R$ is uniserial with $J^2(R)=0$.

At this point, it is natural to consider the rings whose simple-direct-injective modules are simple-direct-projective, and the rings whose simple-direct-projective modules are simple-direct-injective. $C3$ and $D3$ right $R$-modules are simple-direct-injective and simple-direct-projective respectively. Thus, uniserial rings with $J^2(R)=0$ are examples of such rings.


In section 4,  we prove that, every simple-direct-injective right module is simple-direct-projective iff the ring is left perfect right $H$-ring (Theorem \ref{thm:charofsdisdp}). As a consequence, we show that, commutative perfect rings are examples of such rings. For a commutative Noetherian ring, we obtain that, simple-direct-injective modules are simple-direct-projective iff the ring is Artinian (Corollary \ref{cor:comm.Noethersdiaresdp}). We show that, the rings whose simple-direct-projective right modules are simple-direct-injective are right max-rings (Proposition \ref{prop:sdparesdithentheringMAXRING}). For a commutative Noetherian ring, we prove that, simple-direct-projective modules are simple-direct-injective iff simple-direct-injective modules are simple-direct-projective iff the ring is Artinian (Corollary \ref{cor:Artiniffsdisdpiffsdpsdi}).

Throughout, rings are associative with unity and modules are unitary. For a module $M$, we denote by $\Rad (M)$, $\Soc (M)$, $Z(M)$ and $E(M)$ the Jacobson radical, the socle, the singular submodule and the injective hull of $M$, respectively. The Jacobson radical of a ring $R$ will be denoted by $J(R)$. We write $L \subseteq M$ if $N$ is a submodule of $M$, and $L \ds M$
if $L$ is a direct summand of $M$. For a module $M$ over a commutative domain $R$, we denote the torsion submodule of $M$ by $T(M)$. Over the ring of integers, we denote by $\Omega$ the set of prime integers. It is well known that $T(M)=\oplus_{p \in \Omega}T_p(M)$, where $T_p(M)$ is the {\it p-primary component} of $T(M)$ i.e. the set of all $m \in T(M)$ such that $p^n.m=0$ for some positive integer $n$. An abelian group $G$ is {\it bounded} if $nG=0$ for some positive integer $n$. For $p \in \Omega$, the simple $\Z$-module of order $p$ will be denoted by $\Z_p$.  \\ A monomorphism $f: M \to N$ of right modules is called {\it pure-monomorphism} if the induced map $f \otimes 1_L : M \otimes L \to N \otimes L$ is a monomorphism for each left module $L$. Let $B$ be a right module and $A \subseteq B$. A is called a pure submodule of $B$ if the inclusion map $i: A \to B$ is a pure monomorphism. A subgroup $A$ of an abelian group $B$ is pure iff $nA=A \cap nB$ for each integer $n$ (see \cite{fuchs}). A right module $E$ is called {\it pure-injective} if $E$ for every pure monomorphism $f:M \to N$ of right modules, any homomorphism $g:M \to E$ can be extended to a homomorphism $h:N \to E$ (see, \cite{FuchsAndSalce:ModulesOverNonNoetherianDomains}).

\section{Simple-direct-projective modules}

In this section, we give a complete characterization of simple-direct-projective modules over the ring of integers. As a byproduct we obtain a characterization of simple-direct-projective modules over local, and local right perfect rings. We also prove that, the ring is semilocal iff every right module with zero Jacobson radical is simple-direct-projective.

Following \cite{sdp1}, a right $R$-module $M$ is called \emph{simple-direct-projective} if, whenever $A$ and $B$ are submodules of $M$ with $B$ is simple and $\frac{M}{A}\cong B \ds M$, then $A \ds M$.

A submodule $K$ of a module $M$ is {\it small}, denoted as $K \ll M$, if $K+N=M$ implies $N=M$ for each $N \subseteq M$.
A submodule $L$ of a module $M$ is {\it coclosed} in $M$ if $\frac{L}{K} \ll \frac{M}{K}$ implies $K=L$ for each submodule $K$ of $L$ (see \cite{lifting}). \\Let $S$ be a simple submodule of a module $M$. It is easy to see that, $S\ll M$ or $S \ds M$. Thus any simple coclosed submodule is a direct summand. This fact will be used in the sequel.

In order to characterize simple-direct-projective abelian groups, we need several lemmas. We begin with the following.

\begin{lemma}\label{lemma:coclosedsdp} Let $M$ be a simple-direct-projective right module, and $L$ be a coclosed submodule of $M$. If $\Soc(M) \subseteq L$, then $L$ is simple-direct-projective.
\end{lemma}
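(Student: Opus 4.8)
The plan is to verify the definition of simple-direct-projectivity for $L$ directly. So I would start from submodules $A, B \subseteq L$ with $B$ simple, $L/A \cong B$, and $B \ds L$, and aim to show $A \ds L$. First observe that if $B \not\subseteq A$ then $A + B = L$ and $A \cap B = 0$ (since $A$ is maximal in $L$ and $B$ is simple), so $L = A \oplus B$ and we are done; hence we may assume $B \subseteq A$. Since $L/A \cong B$ is simple, $A$ is maximal in $L$, and therefore $A \ds L$ is equivalent to $\Soc(L) \not\subseteq A$ (a simple submodule of $L$ not inside $A$ is a complement of $A$, and conversely any complement is isomorphic to $L/A$, hence simple, hence inside $\Soc(L)$). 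Also, because $\Soc(M) \subseteq L$, one has $\Soc(L) = \Soc(M)$. So the goal becomes: derive a contradiction from $\Soc(M) = \Soc(L) \subseteq A$.

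The key step is to use coclosedness on the quotient $M/A$. Since $A \subsetneq L$, the definition of coclosed submodule (with $K = A$) forbids $L/A \ll M/A$; as $L/A \cong B$ is simple and a simple submodule is always either small or a direct summand, this yields $L/A \ds M/A$. Writing $M/A = (L/A) \oplus (N/A)$ with $A \subseteq N \subseteq M$, we get an epimorphism $M \to M/N \cong L/A \cong B$ whose kernel $N$ contains $A$.

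Before invoking that $M$ is simple-direct-projective, I would check $B \ds M$ (and not merely $B \ds L$): if $B \ll M$, choosing a complement $L'$ of $B$ in $L$ makes $L/L'$ the image of $B$ under $M \to M/L'$, hence small in $M/L'$, and coclosedness forces $L' = L$, i.e.\ $B = 0$, a contradiction; thus $B \not\ll M$ and, being simple, $B \ds M$. Now apply simple-direct-projectivity of $M$ to the pair $(N, B)$: from $M/N \cong B \ds M$ we obtain $N \ds M$, say $M = N \oplus S$. Then $S \cong M/N \cong B$ is a simple submodule of $M$, so $S \subseteq \Soc(M) \subseteq A \subseteq N$; but $S \cap N = 0$, so $S = 0$, contradicting $S \cong B \neq 0$. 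Hence $\Soc(L) \not\subseteq A$, so $A \ds L$, and $L$ is simple-direct-projective.

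The only genuinely non-routine point is the idea behind the key step: passing to $M/A$ and using coclosedness there to split off the simple submodule $L/A \cong B$, thereby producing a surjection of $M$ onto $B$ with kernel above $A$. After that, the hypothesis $\Soc(M) \subseteq L$ does exactly the work needed, since it forces the simple complement produced by simple-direct-projectivity of $M$ to sit inside its own complement and hence vanish. The verification that $B$ stays a summand in $M$ is a small secondary use of coclosedness.
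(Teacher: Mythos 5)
Your proof is correct and follows essentially the same route as the paper's: use coclosedness of $L$ to split $L/A$ off in $M/A$ and to see that the simple summand stays a summand of $M$, then apply simple-direct-projectivity of $M$ to the resulting $N$ with $M/N\cong B$, and finally use $\Soc(M)\subseteq L$ to locate the simple complement. The only cosmetic differences are that you verify the two coclosedness facts directly from the definition rather than citing the standard transitivity properties, and you finish by contradiction where the paper applies the modular law.
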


\begin{proof} Let $L$ be a coclosed submodule of $M$. Suppose $\frac{L}{K} \cong S \ds L$, where $S$ is a simple submodule of $L$. Then  $S$ is a coclosed submodule of $M$ as well by \cite[3.7.(1)]{lifting}. As $S$ is a coclosed submodule of $M$, $S$ is not small in $M$. Thus $S \ds M$.
Since $L$ is a coclosed submodule of $M$, $\frac{L}{K}$ is a coclosed submodule of $\frac{M}{K}$ by \cite[3.7.(1)]{lifting}. Thus $\frac{L}{K}$ is not small in $\frac{M}{K}$, and so $\frac{L}{K} \oplus \frac{N}{K} \cong \frac{M}{K}$, for some submodule $N$ of $M$. Clearly, $L \cap N =K$ and $\frac{M}{N} \cong S \ds M$. Since $M$ is simple-direct-projective, $M=N \oplus B$ for some simple submodule $B$ of $M$. Using the fact that $\Soc (M) \subseteq L$ we get, by modular law, that $L=L\cap N \oplus B$ i.e. $L\cap N=K \ds L$. Hence $L$ is simple-direct-projective.
\end{proof}

\begin{lemma}\label{lemma:torsionsubmodulecoclosed} Let $G$ be an abelian group and $T(G)$ be the torsion submodule of $G$. Then $T(G)$ is a coclosed submodule of $G$.
\end{lemma}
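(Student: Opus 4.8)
The plan is to use the definition of coclosed directly: given a subgroup $K \subseteq T(G)$ with $T(G)/K \ll G/K$, I must show $K = T(G)$. The key observation is that $T(G)/K = T(G/K)$, so the hypothesis says the torsion subgroup of $G/K$ is small in $G/K$. So it suffices to prove the following stand-alone fact: if the torsion subgroup $T(H)$ of an abelian group $H$ is small in $H$, then $T(H) = 0$; applying this to $H = G/K$ gives $T(G)/K = 0$, i.e. $K = T(G)$.

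To prove that fact, I would argue by contradiction and primary components. Suppose $T(H) \neq 0$; then some primary component $T_p(H) \neq 0$, so there is an element $x \in H$ of order $p$. First I would handle the element $x$ itself: the cyclic group $\langle x \rangle \cong \Z_p$ is either a direct summand of $H$ or small in $H$ (the fact about simple submodules quoted in the excerpt, applied to $H$ as a $\Z$-module). If $\langle x\rangle$ is a direct summand, write $H = \langle x\rangle \oplus H'$; then $\langle x\rangle$ is not small in $H$, yet $\langle x \rangle \subseteq T(H)$, so $T(H)$ is not small in $H$ either, contradiction. If instead $\langle x\rangle \ll H$, I need a different route — and this is the main obstacle: a single small simple subgroup does not by itself make all of $T(H)$ non-small, so I should not expect to reduce to one element.

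The cleaner approach is therefore to avoid the dichotomy and instead exhibit a proper subgroup $N \subsetneq H$ with $T(H) + N = H$, contradicting smallness of $T(H)$. Since $T(H) \neq 0$, pick a prime $p$ with $T_p(H) \neq 0$. Consider the subgroup $N = \{h \in H : h \text{ has no element of order } p \text{ in } \langle h\rangle\}$ — more carefully, I would use that $H/T(H)$ is torsion-free together with the structure of $T_p(H)$: choose $y \in T_p(H)$ of order $p$, and let $N$ be a subgroup maximal with respect to $y \notin N$ (such $N$ exists by Zorn's lemma). By maximality $H/N$ is a cocyclic $p$-group with socle generated by the image of $y$, in particular $H/N$ is a nonzero $p$-torsion-containing cyclic or Prüfer group; since every nonzero element of $H/N$ has $p$-torsion in its closure while $y$ generates the unique minimal subgroup, one checks $T(H) + N = H$ (the image of $T(H)$ in $H/N$ contains the socle, which generates $H/N$ as $H/N$ is cocyclic) yet $N \neq H$ since $y \notin N$. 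This contradicts $T(H) \ll H$. Hence $T(H) = 0$, completing the proof. The routine verifications are the identity $T(G)/K = T(G/K)$ and the cocyclic-group bookkeeping; the conceptual point is reducing "coclosed" to the smallness statement and then forcing a splitting or a proper complement off a $p$-torsion element.
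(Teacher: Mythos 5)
Your reduction is sound: since $K\subseteq T(G)$, one does have $T(G)/K=T(G/K)$, so the lemma is equivalent to the statement that an abelian group $H$ with $T(H)\ll H$ satisfies $T(H)=0$; this is essentially how the paper's proof is organized too. The gap is in your proof of that statement. In your ``cleaner approach'' you pick $y\in T_p(H)$ of order $p$, take $N$ maximal with respect to $y\notin N$ (so $H/N$ is cocyclic), and assert $T(H)+N=H$ on the grounds that ``the image of $T(H)$ contains the socle, which generates $H/N$ as $H/N$ is cocyclic.'' That clause is false: a cocyclic $p$-group is $\Z_{p^k}$ or $\Z_{p^\infty}$, and its socle is the unique \emph{minimal} subgroup, which generates the group only when the group is $\Z_p$ itself. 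Nor does the weaker needed claim $(T(H)+N)/N=H/N$ hold automatically: $H/N$ is torsion, but its elements need not lift to torsion elements of $H$. Concretely, let $H=\Z_p\oplus\Z$, $y=(1,0)$, and $N=\Ker(\phi)$ where $\phi\colon H\to \Z/p^2\Z$ sends $(a,n)$ to $pa+n$. Then $N$ is maximal with respect to excluding $y$ and $H/N\cong\Z/p^2\Z$, but the image of $T(H)=\Z_p\oplus 0$ is only the socle $p(\Z/p^2\Z)$, so $T(H)+N\neq H$. Thus the constructed $N$ need not witness non-smallness of $T(H)$, and the contradiction you want does not materialize. Your earlier dichotomy (an order-$p$ element generating a summand) is correct but, as you yourself observe, does not cover the case where every simple subgroup is small, and the Zorn construction does not repair this.

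The fact you want is true, but the working mechanism is different. If $T(H)\neq 0$ has a maximal subgroup $L$, then $T(H)/L\ll H/L$ would follow from $T(H)\ll H$ (homomorphic images of small submodules are small, \cite[Lemma 5.18]{Anderson-Fuller:RingsandCategoriesofModules}); but $T(H)/L$ is pure in $H/L$ (purity of $T(H)$ in $H$ passes to quotients, \cite[Lemma 26.1(ii)]{fuchs}) and bounded, hence a direct summand of $H/L$ by \cite[Theorem 27.5]{fuchs}, and a nonzero direct summand is never small --- contradiction. If $T(H)$ has no maximal subgroup, it is divisible, hence injective, hence a nonzero direct summand of $H$, again not small. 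This is exactly the paper's argument, carried out for $T(G)/A$ inside $G/A$.
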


\begin{proof} Set $T=T(G)$. By  \cite[Proposition 8.12]{FuchsAndSalce:ModulesOverNonNoetherianDomains}, $T$ is a pure submodule of $G$. In order to show that $T$ is a coclosed submodule of $G$, suppose $\frac{T}{A}$ is small in $\frac{G}{A}$ for some proper submodule $A$ of $G$, and let us obtain a contradiction. If $\frac{T}{A}$ has no maximal submodules, then $\frac{T}{A}$  is injective by \cite[pg.99 Ex.1 and Theorem 21.1]{fuchs}. Being small and injective implies $\frac{T}{A}=0$ i.e. $T=A$, a contradiction. Now suppose, there is a maximal submodule $L$ of $T$ such that $A \subseteq L \subseteq T.$ By \cite[Lemma 5.18]{Anderson-Fuller:RingsandCategoriesofModules} homomorphic images of small submodules is small, and hence $\frac{T}{L}$ is small in $\frac{G}{L}$. By \cite[Lemma 26.1(ii)]{fuchs} pure subgroups are closed under factor modules, so $\frac{T}{L}$ is pure in $\frac{G}{L}$. On the other hand, $\frac{T}{L}$ is simple, and so it is bounded. Then $\frac{T}{L}$ is a direct summand of $\frac{G}{L}$ by \cite[Theorem 27.5]{fuchs}. Now $\frac{T}{L}$ is both  small and a direct summand in $\frac{G}{L}$, which is a contradiction. In conclusion $\frac{T}{A}$ is not small in $\frac{G}{A}$ for any proper subgroup $A \subseteq T$, that is,  $T$ is a coclosed subgroup of $G$.
\end{proof}

\begin{corollary}\label{corollary:torsionsubmoduledirectprojective} If $M$ is a simple-direct-projective abelian group, then the torsion submodule $T(M)$ of $M$ is simple-direct-projective.
\end{corollary}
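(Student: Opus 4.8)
The plan is to deduce this immediately from the two preceding lemmas. By Lemma \ref{lemma:torsionsubmodulecoclosed}, the torsion submodule $T(M)$ is a coclosed submodule of $M$. Lemma \ref{lemma:coclosedsdp} then tells us that $T(M)$ inherits simple-direct-projectivity from $M$, provided we can check the hypothesis $\Soc(M) \subseteq T(M)$. So the only thing left to verify is that the socle of an abelian group lies inside its torsion submodule.

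That verification is routine: $\Soc(M)$ is, by definition, the sum of all simple submodules of $M$, and over $\Z$ every simple module is isomorphic to $\Z_p$ for some prime $p$, hence is annihilated by $p$ and in particular torsion. Thus every simple submodule of $M$ is contained in $T(M)$, and therefore $\Soc(M) \subseteq T(M)$.

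With this in hand the argument is complete: apply Lemma \ref{lemma:coclosedsdp} to the coclosed submodule $L = T(M)$ of the simple-direct-projective group $M$, noting $\Soc(M) \subseteq T(M)$, to conclude that $T(M)$ is simple-direct-projective. There is essentially no obstacle here — the corollary is a formal consequence of Lemmas \ref{lemma:coclosedsdp} and \ref{lemma:torsionsubmodulecoclosed} once one records the trivial inclusion of the socle in the torsion part; the real content was already spent in proving that $T(M)$ is coclosed (Lemma \ref{lemma:torsionsubmodulecoclosed}).
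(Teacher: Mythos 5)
Your proposal is correct and matches the paper's own proof exactly: both observe that simple abelian groups are torsion, hence $\Soc(M) \subseteq T(M)$, and then combine Lemma \ref{lemma:torsionsubmodulecoclosed} (which gives that $T(M)$ is coclosed) with Lemma \ref{lemma:coclosedsdp}. Nothing is missing.
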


\begin{proof} Let $M$ be a simple-direct-projective abelian group. Since simple abelian groups are torsion, $\Soc(M) \subseteq T(M)$. Then the proof is clear by Lemma \ref{lemma:coclosedsdp} and Lemma \ref{lemma:torsionsubmodulecoclosed}.
\end{proof}

The right modules with no simple summands, and the right modules whose maximal submodules are direct summands are trivial examples of simple-direct-projective modules. We include the following lemma for easy reference.

\begin{lemma}\label{lemma:extremesocle} Let $M$ a right module. Suppose $\Soc(M) \subseteq \Rad (M)$ or $\frac{M}{\Soc(M)}$ has no maximal submodules. Then $M$ is simple-direct-projective.
\end{lemma}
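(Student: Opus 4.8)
The plan is to show that under each of the two hypotheses $M$ falls into one of the trivial classes of simple-direct-projective modules mentioned just above the statement: modules with no simple direct summands, and modules all of whose maximal submodules are direct summands. So I would split into the two cases.

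Assume first that $\Soc(M)\subseteq\Rad(M)$. Here I would argue that $M$ has no simple direct summand, which makes the defining condition of simple-direct-projectivity vacuous. Indeed, if $B$ were a simple submodule with $M=B\oplus C$, then $B\subseteq\Soc(M)\subseteq\Rad(M)$; since $\Rad(M)=\Rad(B)\oplus\Rad(C)$ and $\Rad(B)=0$ (as $B$ is simple), we would get $\Rad(M)\subseteq C$, hence $B=B\cap\Rad(M)=0$, a contradiction. So this case is immediate.

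Now assume $M/\Soc(M)$ has no maximal submodule. In this case I would show that every maximal submodule $A$ of $M$ is a direct summand; since in the definition of simple-direct-projectivity the hypothesis $M/A\cong B$ with $B$ simple forces $A$ to be maximal, this is enough. Given a maximal $A\subsetneq M$, I first note $\Soc(M)\not\subseteq A$ — otherwise $A/\Soc(M)$ would be a maximal submodule of $M/\Soc(M)$, against the hypothesis — so by maximality of $A$ we get $A+\Soc(M)=M$. Using that $\Soc(M)$ is semisimple, choose $S\subseteq\Soc(M)$ with $\Soc(M)=(A\cap\Soc(M))\oplus S$. Then $M=A+S$ and $A\cap S\subseteq(A\cap\Soc(M))\cap S=0$, so $M=A\oplus S$, and $A$ is a direct summand.

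There is no real obstacle here: the only step requiring a little care is the radical computation in the first case (that $\Rad$ distributes over the finite direct sum $B\oplus C$ and that a simple summand contained in $\Rad(M)$ must be zero), and this is entirely standard. Everything in the second case is a direct use of semisimplicity of the socle together with maximality of $A$.
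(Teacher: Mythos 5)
Your proof is correct and follows essentially the same route as the paper's: the first case is handled by showing $M$ has no simple direct summands (so the defining condition is vacuous), and the second by showing every maximal submodule splits off, using $A+\Soc(M)=M$. The only cosmetic difference is that you take a semisimple complement of $A\cap\Soc(M)$ in $\Soc(M)$ where the paper extracts a single simple submodule $S$ with $A+S=M$; both work.
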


\begin{proof} If $\Soc(M) \subseteq \Rad (M)$, then $M$ has no simple summands and so it is simple-direct-projective. Now assume that $\frac{M}{\Soc (M)}$ has no maximal submodules, and let $K$ be a maximal submodule of $M$. Then $K + \Soc (M)=M$. Thus there is a simple submodule $S$ of $M$ such that $K +S=M$. By simplicity of $S$, $K\cap S=0$, and so $K \ds M$. Hence $M$ is simple-direct-projective.
\end{proof}

First, we give a characterization of simple-direct-projective torsion abelian groups.

\begin{proposition}\label{lemma:torsinsdp} Let $M$ be a torsion abelian group. The following statements are equivalent.
\begin{enumerate}
\item $M$ is simple-direct-projective.
\item $T_p(M)$ is simple-direct-projective for every $p \in \Omega$.
\item For every $p \in \Omega$,\\ $(i)$ $\Soc(T_p(M)) \subseteq \Rad (T_p(M))$, or\\
$(ii)$ $\displaystyle \frac{T_p(M)}{\Soc(T_p(M))}$ has no maximal submodules.
\end{enumerate}
\end{proposition}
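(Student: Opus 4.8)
The plan is to prove the three statements equivalent via the cycle $(1)\Rightarrow(2)\Rightarrow(3)\Rightarrow(1)$, using the primary decomposition $M=\bigoplus_{p\in\Omega}T_p(M)$ throughout. For $(1)\Rightarrow(2)$, I would observe that each $T_p(M)$ is a direct summand of $M$, so it suffices to know that simple-direct-projectivity passes to direct summands; this should be routine from the definition (if $L\ds M$ and $M=L\oplus L'$, then any submodule $A\subseteq L$ with $L/A\cong B\ds L$ gives $M/(A\oplus L')\cong L/A\cong B\ds M$, whence $A\oplus L'\ds M$ and then $A\ds L$ by modularity — essentially the argument already used in Lemma \ref{lemma:coclosedsdp}). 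Alternatively, since $T_p(M)$ is bounded-by-nothing but is a fully invariant, in fact coclosed, summand, one can cite that a direct summand of a simple-direct-projective module is simple-direct-projective.

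For $(2)\Rightarrow(3)$, I would fix a prime $p$ and work inside the $p$-group $G:=T_p(M)$, showing that if neither $(i)$ nor $(ii)$ holds then $G$ is not simple-direct-projective. So assume there is a simple summand $S\cong\Z_p$ of $G$ with $S\not\subseteq\Rad(G)$ — equivalently $\Soc(G)\not\subseteq\Rad(G)$ forces the existence of such an $S$ — and assume $G/\Soc(G)$ has a maximal submodule. The idea is to build submodules $A\subseteq G$ with $G/A\cong\Z_p$ but $A$ not a summand. Here the key structural input is Prüfer-type theory of $p$-groups: a maximal submodule $N/\Soc(G)$ of $G/\Soc(G)$ pulls back to a submodule $N$ with $G/N\cong\Z_p$, and one arranges (using that $S$ is not small, i.e. $S\cap\Rad(G)=0$, so $S$ maps isomorphically into $G/\Soc(G)$ and can be taken disjoint from the chosen maximal submodule) that $N$ contains no complement to $S$ unless $N$ itself splits. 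The main obstacle will be exactly this: manufacturing a non-split submodule $A$ with cyclic-of-order-$p$ quotient when both degeneracy conditions fail — this is where one must exploit that $\Soc(G)$ is large (not inside $\Rad(G)$) yet $G/\Soc(G)$ still has maximal submodules, e.g. by a diagonal-embedding trick placing a copy of $\Z_p$ "halfway" between a direct summand $\Z_{p^2}$-like piece (guaranteed by $G/\Soc(G)$ having a maximal submodule that is not a summand complement) and the socle.

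For $(3)\Rightarrow(1)$, I would use Lemma \ref{lemma:extremesocle}: each condition $(i)$ or $(ii)$ on $T_p(M)$ makes $T_p(M)$ simple-direct-projective by that lemma. It then remains to show that an arbitrary direct sum $M=\bigoplus_{p}T_p(M)$ of simple-direct-projective $p$-groups for distinct primes is again simple-direct-projective. This should follow because a simple submodule $B\subseteq M$ is necessarily $p$-primary for a single prime $p$, hence $B\subseteq T_p(M)$, and a complement to $B$ in $M$ can be found by complementing $B$ inside $T_p(M)$ (using $(2)\Leftrightarrow(3)$ for that one prime via Lemma \ref{lemma:extremesocle}) and adding $\bigoplus_{q\neq p}T_q(M)$; the quotient condition $M/A\cong B$ also localizes at $p$ since $M/A$ being simple $p$-primary forces $\bigoplus_{q\neq p}T_q(M)\subseteq A$. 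So the cross-prime direct sum introduces no new failures, and $(1)$ follows. The only delicate point in this last step is checking that $A$ really does contain all the off-$p$ components, which is immediate because those components are $p$-divisible and map to zero in the simple $p$-torsion quotient $M/A$.
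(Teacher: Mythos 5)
Your overall route ($(1)\Rightarrow(2)$ by closure of simple-direct-projectivity under direct summands, the equivalence $(2)\Leftrightarrow(3)$ prime by prime, and the reduction of $(1)$ to a single primary component using $p$-divisibility of the off-$p$ components) is the same as the paper's, and the $(1)\Rightarrow(2)$ and $(3)\Rightarrow(1)$ sketches are essentially sound. One imprecision in the last step: what must be complemented is not $B$ (which is already a summand by hypothesis) but $A$; the correct mechanism is that maximality of $A$ together with $T_p(M)\nsubseteq A$ gives $A+T_p(M)=M$, hence $\frac{T_p(M)}{A\cap T_p(M)}\cong \frac{M}{A}\cong B\ds T_p(M)$, so simple-direct-projectivity of $T_p(M)$ yields a simple complement $C$ of $A\cap T_p(M)$ in $T_p(M)$, and then $M=A\oplus C$.

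The genuine gap is in $(2)\Rightarrow(3)$. Your parenthetical justification contains a false statement: a simple subgroup $S$ of $G:=T_p(M)$ satisfies $S\subseteq\Soc(G)$, so $S$ maps to zero in $G/\Soc(G)$; it cannot ``map isomorphically into $G/\Soc(G)$'' nor be taken disjoint from a submodule containing $\Soc(G)$. More importantly, the ``main obstacle'' you describe --- manufacturing a non-split submodule with quotient $\Z_p$ via a diagonal-embedding trick --- does not exist, and since the trick is never made precise, this direction is not actually proved. The argument is immediate: if $(i)$ fails, some simple $S$ is not small, hence $S\ds G$, and since $G$ is a $p$-group every simple subfactor of $G$ is isomorphic to $S$. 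If $(ii)$ also failed, there would be a maximal submodule $N$ of $G$ with $\Soc(G)\subseteq N$; then $\frac{G}{N}\cong S\ds G$, so simple-direct-projectivity of $G$ gives $G=N\oplus C$ with $C\cong \frac{G}{N}$ simple, whence $C\subseteq\Soc(G)\subseteq N$ and $C=0$, contradicting $N\neq G$. The non-splitness of $N$ is automatic from $\Soc(G)\subseteq N$; no construction is needed.
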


\begin{proof} $(1) \Rightarrow (2)$ Since $M$ is torsion, $M=\oplus_{p \in \Omega}T_p(M)$. Then, by \cite[Proposition 2.4]{sdp1},  $T_p(M)$ is simple-direct-projective for every $p \in \Omega$.

$(2) \Rightarrow (3)$  Suppose $(i)$ does not hold. Then there is a simple subgroup $S$ of $T_p(M)$ such that $S$ is not contained in $\Rad(T_p(M))$. Thus $S$ is not small in $T_p(M)$, and so $S \ds T_p(M)$. Note that, all simple subgroups and simple factors of $T_p(M)$ are isomorphic to $S$. Assume that $A$ is a maximal submodule of $T_p(M)$ such that, $\Soc(T_p(M)) \subseteq A \subseteq T_p(M)$. Then $\frac{T_p(M)}{A}\cong S \ds T_p(M)$. Then, as $T_p(M)$ is simple-direct-projective, $T_p(M)=A \oplus S'$ for some simple submodule $S'$ of $T_p(M)$. Then $S' \subseteq \Soc(T_p(M)) \subseteq A$, which is a contradiction. Hence $\frac{T_p(M)}{\Soc(T_p(M))}$ has no maximal submodules i.e. $(ii)$ holds.

$(3) \Rightarrow (2)$ By Lemma \ref{lemma:extremesocle}.

$(2) \Rightarrow (1)$ Let $A$ and $B$ be subgroups of $M$ with $B$ simple and $\frac{M}{A} \cong B \ds M$. As $B$ is simple, there is a $p \in \Omega$ such that $B \subseteq T_p(M)$ and $pB=0$. As $B \ds M$, $B \ds T_p(M)$. Since $pB=0$ and $\frac{M}{A} \cong B$, we have $p(\frac{M}{A})=0$ i.e. $pM \subseteq A$. For any prime $q \neq p$, it is easy to see that,  $ T_q(M)=p T_q(M) \subseteq pM$. Thus for all primes $q \neq p$, $T_q(M) \subseteq pM \subseteq A$. Since $A$ is maximal subgroup, $T_p(M)$ is not contained in $A$, otherwise we would have $M=\oplus_{p \in \Omega}T_p(M) \subseteq A$, which is not the case as $A$ is a maximal subgroup of $M$. Thus, by maximality of $A$, we have $A+T_p(M)=M$. Then $$\frac{M}{A}=\frac{T_p(M)+A}{A} \cong \frac{T_p(M)}{A\cap T_p(M)} \cong B \ds T_p(M).$$ Since $T_p(M)$ is simple-direct-projective, $A\cap T_p(M) \oplus C=T_p(M)$ for some simple subgroup $C$ of $T_p(M)$. Then we get $M=A +T_p(M)=A+ [A\cap T_p(M) \oplus C]=A \oplus C$. Hence $M$ is simple-direct-projective.
\end{proof}

\begin{theorem}\label{thm:sdpAbelianGroups} Let $M$ be an abelian group. The following statements are equivalent.
\begin{enumerate}
\item $M$ is simple-direct-projective.
\item $(a)$ $T(M)$ is simple-direct-projective, and \\ $(b)$ for each $p \in \Omega$ such that $pM +T(M) \neq M$, $\Soc(T_p(M)) \subseteq \Rad (T_p(M))$.
\end{enumerate}
\end{theorem}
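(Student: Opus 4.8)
The plan is to prove the two implications $(1)\Rightarrow(2)$ and $(2)\Rightarrow(1)$ separately, using the torsion case already settled in Proposition \ref{lemma:torsinsdp}, the coclosedness of $T(M)$ from Lemma \ref{lemma:torsionsubmodulecoclosed}, and repeated applications of the modular law to the decomposition $T(M)=\bigoplus_{p\in\Omega}T_p(M)$.

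For $(1)\Rightarrow(2)$, part $(a)$ is exactly Corollary \ref{corollary:torsionsubmoduledirectprojective}. For part $(b)$ I would argue by contraposition: assuming $\Soc(T_p(M))\not\subseteq\Rad(T_p(M))$ for some prime $p$, I will show $pM+T(M)=M$. Pick a simple subgroup $S\subseteq T_p(M)$ with $S\not\subseteq\Rad(T_p(M))$; then $S$ is not small in $T_p(M)$, so (being simple) $S\ds T_p(M)$, and since $T_p(M)\ds T(M)$ we get $S\ds T(M)$. The key move is then: as $T(M)$ is coclosed in $M$ (Lemma \ref{lemma:torsionsubmodulecoclosed}) and a direct summand of a coclosed submodule is coclosed (the transitivity argument already used in the proof of Lemma \ref{lemma:coclosedsdp} via \cite[3.7.(1)]{lifting}), $S$ is coclosed in $M$; a simple coclosed submodule is a direct summand, so $S\ds M$. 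Now suppose, for contradiction, $pM+T(M)\neq M$. Then $M/(pM+T(M))$ is a nonzero $\Z_p$-vector space, so there is a maximal subgroup $A$ of $M$ with $pM+T(M)\subseteq A$ and $M/A\cong\Z_p\cong S\ds M$. Simple-direct-projectivity of $M$ yields $A\ds M$, say $M=A\oplus C$ with $C\cong M/A$ simple; then $C$ is $p$-torsion, so $C\subseteq T_p(M)\subseteq T(M)\subseteq A$, forcing $C=0$, which is absurd. Hence $pM+T(M)=M$, which proves $(b)$.

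For $(2)\Rightarrow(1)$, take $A,B\subseteq M$ with $B$ simple and $M/A\cong B\ds M$; then $A$ is maximal, and writing $B\cong\Z_p$ we have $pM\subseteq A$ and $B\subseteq T_p(M)$. Since $B\subseteq T(M)$ and $B\ds M$, the modular law gives first $B\ds T(M)$ and then $B\ds T_p(M)$. I split into two cases. If $pM+T(M)=M$, then $A+T(M)=M$, whence $T(M)/(A\cap T(M))\cong M/A\cong B\ds T(M)$; since $T(M)$ is simple-direct-projective by $(a)$, $A\cap T(M)$ is a summand of $T(M)$ with simple complement $C$, and then $M=A+T(M)=A\oplus C$, so $A\ds M$. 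If $pM+T(M)\neq M$, then hypothesis $(b)$ gives $\Soc(T_p(M))\subseteq\Rad(T_p(M))$, while $B$ is a simple summand of $T_p(M)$, hence not small in $T_p(M)$, i.e. $B\not\subseteq\Rad(T_p(M))$ — contradicting $B\subseteq\Soc(T_p(M))$. So this case does not occur, and in all cases $A\ds M$, as required.

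I expect the only genuinely delicate point to be the passage from ``$S\ds T_p(M)$'' to ``$S\ds M$'' in $(1)\Rightarrow(2)$: because $T(M)$ is in general not a direct summand of $M$, one cannot simply pull a splitting back, and it is the coclosedness detour (together with the observation that a simple coclosed submodule splits off) that bridges the gap. The remaining steps are routine bookkeeping with $T(M)=\bigoplus_{p\in\Omega}T_p(M)$ and the modular law, and the verification that $M/(pM+T(M))\cong (M/T(M))/p(M/T(M))$ is a $\Z_p$-vector space.
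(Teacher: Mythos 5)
Your proof is correct, and its overall skeleton is the same as the paper's: part $(a)$ via Corollary \ref{corollary:torsionsubmoduledirectprojective}, part $(b)$ by producing a maximal subgroup $A\supseteq pM+T(M)$ with $M/A\cong\Z_p$ and playing it against a simple summand of $T_p(M)$, and the converse by reducing a maximal $A$ with $A+T(M)=M$ to the simple-direct-projectivity of the torsion part. The one place where you genuinely diverge is the step you correctly single out as delicate, namely lifting $S\ds T_p(M)$ to $S\ds M$: the paper does this by purity, observing that $S$ is pure in $T(M)$ and $T(M)$ is pure in $M$, so $S$ is a bounded pure subgroup of $M$ and hence a summand by Fuchs's Theorem 27.5; you instead use that $T(M)$ is coclosed in $M$ (Lemma \ref{lemma:torsionsubmodulecoclosed}), that coclosedness passes to direct summands and is transitive, and that a simple coclosed submodule splits off. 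Both routes are valid and both lemmas are already in the paper; your coclosedness route is arguably more uniform with Lemma \ref{lemma:coclosedsdp}, while the paper's purity route is the more classical abelian-group argument. The remaining differences (stating $(1)\Rightarrow(2)(b)$ contrapositively, and running the converse through $T(M)$ rather than $T_p(M)$) are cosmetic.
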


\begin{proof}$(1) \Rightarrow (2)$ By Corollary \ref{corollary:torsionsubmoduledirectprojective}, $T(M)$ is simple-direct-projective. Now let $p \in \Omega$ be such that $pM+ T(M) \neq M$. Then, as $\frac{M}{pM}$ is a homogoneous semisimple with each simple subgroup isomorphic to $\Z_p$ and $\frac{pM+T(M)}{pM}\neq \frac{M}{pM}$, there is a maximal subgroup $A$ of $M$ such that $T(M) \subseteq pM +T(M) \subseteq A$ and  $\frac{M}{A} \cong \Z_p$. \\ We need to show that, $\Soc(T_p(M)) \nsubseteq \Rad (T_p(M)$.  Suppose the contrary that $\Soc(T_p(M)) \nsubseteq \Rad (T_p(M)$. Then  there is a simple subgroup $S$ of $T_p(M)$ which is not contained in $\Rad (T_p(M)$. Then $S \ds T_p(M)$, and since $T_p(M)$ is a direct summand of $T(M)$, $S \ds T(M)$ as well. Then as $S$ is a pure subgroup of $T(M)$ and $T(M)$ is pure subgroup of $M$, $S$ is a pure subgroup of $M$. Thus $S$ is a pure and bounded subgroup of $M$, and so $S$ is a direct summand of $M$ by \cite[Theorem 27.5]{fuchs}. Since $S \cong \Z_p$ and $\frac{M}{A} \cong \Z_p \cong S \ds M$, simple-direct-projectivity of $M$ implies that $A \ds M$, i.e. $M=A \oplus D$ for some simple subgroup $D$ of $M$. Then $D \subseteq T(M) \subseteq A$, which is a contradiction. Hence we must have $\Soc(T_p(M)) \subseteq \Rad (T_p(M))$, and this proves $(2)$.

$(2) \Rightarrow (1)$ Let $A$ and $B$ be subgroups of $M$ with $B$ simple and $\frac{M}{A} \cong B \ds M$. Since $B$ is simple, $B \cong \Z_p$ for some $p \in \Omega$, in particular $B \subseteq \Soc(T_p(M)$ and $p(\frac{M}{A})\cong pB=0$ i.e. $pM \subseteq A$. As $B\ds M$, $B$ is not contained in $\Rad(T_p(M))$. Thus $\Soc(T_p(M)) \nsubseteq \Rad (T_p(M))$. Then $pM +T(M)=M$ by $(2)$. Thus $A +T(M)=M$. By similar arguments as in the proof of [Proposition \ref{lemma:torsinsdp}, $(2) \Rightarrow (1)$], we obtain that $A$ is a direct summand of $M$. Hence $M$ is simple-direct-projective.
\end{proof}

\begin{corollary} Let $M$ an abelian group. Suppose $\frac{M}{T(M)}$ has no maximal subgroups. Then $M$ is simple-direct-projective iff every maximal submodule of $M$ is a direct summand.
\end{corollary}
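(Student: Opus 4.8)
The ``if'' direction needs no hypothesis on $M/T(M)$: if every maximal submodule of $M$ is a direct summand and $A,B\subseteq M$ satisfy $B$ simple and $M/A\cong B\ds M$, then $M/A$ is simple, so $A$ is a maximal submodule of $M$, whence $A\ds M$; thus $M$ is simple-direct-projective. So the content lies in the converse, and that is where I would concentrate.

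For the converse, assume $M$ is simple-direct-projective; I would fix a maximal submodule $A$ of $M$ and aim to show $A\ds M$. Writing $M/A\cong\Z_p$, so $pM\subseteq A$, the first step is to observe that $pM+T(M)=M$: indeed $M/(pM+T(M))$ is at once a quotient of the semisimple group $M/pM$ and of $M/T(M)$, hence a semisimple group with no maximal subgroups, so it vanishes. Therefore $A+T(M)=M$ and $M/A\cong T(M)/(A\cap T(M))\cong\Z_p$; since $T_q(M)=pT_q(M)\subseteq pM\subseteq A$ for every prime $q\neq p$, we get $A\cap T(M)=(A\cap T_p(M))\oplus\bigoplus_{q\neq p}T_q(M)$, where $A\cap T_p(M)$ is a maximal submodule of $T_p(M)$ with $T_p(M)/(A\cap T_p(M))\cong\Z_p$. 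A modular-law computation then reduces the goal to showing $A\cap T_p(M)\ds T_p(M)$: any complement of $A$ in $M$ is simple and torsion, hence lies in $T_p(M)$ and splits $A\cap T_p(M)$ off $T_p(M)$, and conversely a splitting $T_p(M)=(A\cap T_p(M))\oplus C$ gives $M=A+T(M)=A+T_p(M)=A+C$ with $A\cap C=0$ (as $C$ is simple and $C\not\subseteq A\cap T_p(M)$), so $M=A\oplus C$.

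To split $A\cap T_p(M)$ off $T_p(M)$, I would use that $T_p(M)$, being a summand of $T(M)$, is simple-direct-projective by Corollary \ref{corollary:torsionsubmoduledirectprojective} and \cite[Proposition 2.4]{sdp1}, so by Proposition \ref{lemma:torsinsdp} either $T_p(M)/\Soc(T_p(M))$ has no maximal subgroups or $\Soc(T_p(M))\subseteq\Rad(T_p(M))$. In the first case $A\cap T_p(M)$ cannot contain $\Soc(T_p(M))$, hence splits off a simple summand, and we are done. The second case is the step I expect to be the main obstacle: here $T_p(M)$ has no simple summand isomorphic to $\Z_p$, yet $A\cap T_p(M)$ still has simple quotient $\Z_p$, so one would have to show this configuration cannot arise for the relevant $p$ — equivalently, produce a simple direct summand $S\cong\Z_p$ of $M$, after which $M/A\cong\Z_p\cong S\ds M$ and simple-direct-projectivity of $M$ gives $A\ds M$ at once. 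This seems to demand extracting more from the divisibility of $M/T(M)$ than the bare identity $pM+T(M)=M$, and it is the point I would scrutinize most carefully; if that second case genuinely cannot be excluded, the assertion one can still salvage is the weaker equivalence ``$M$ is simple-direct-projective if and only if $T(M)$ is'', which is immediate from Theorem \ref{thm:sdpAbelianGroups} since $pM+T(M)=M$ for every prime $p$.
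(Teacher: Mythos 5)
Your dissection of the problem is accurate, and the obstacle you isolate in your ``second case'' is not a defect of your approach: it is a genuine counterexample, so the necessity direction of the corollary is false as stated. Take $M=\Z_{p^2}\oplus\Q$. Then $M/T(M)\cong\Q$ has no maximal subgroups, and $M$ is simple-direct-projective: its unique simple subgroup $p\Z_{p^2}\oplus 0$ lies in $\Rad(M)=p\Z_{p^2}\oplus\Q$, so $M$ has no simple direct summands and the defining condition is vacuous (equivalently, $T(M)=\Z_{p^2}$ satisfies $\Soc\subseteq\Rad$ and clause $(2)(b)$ of Theorem \ref{thm:sdpAbelianGroups} is empty because $qM+T(M)=M$ for every prime $q$). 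Yet $A=p\Z_{p^2}\oplus\Q$ is a maximal subgroup with $M/A\cong\Z_p$ that is not a direct summand: a complement would be a subgroup of order $p$, hence equal to $p\Z_{p^2}\oplus 0\subseteq A$. This is precisely the configuration you feared --- $\Soc(T_p(M))\subseteq\Rad(T_p(M))$ while $A\cap T_p(M)$ is still a maximal subgroup of $T_p(M)$ --- so it cannot be excluded, and no amount of further use of the divisibility of $M/T(M)$ will produce the needed simple summand.

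The paper's own proof breaks at exactly the point you flagged: after establishing $A+T(M)=M$ it invokes ``the proof of Theorem \ref{thm:sdpAbelianGroups}, $(2)\Rightarrow(1)$,'' but that argument splits $A\cap T_p(M)$ off $T_p(M)$ only because its hypotheses supply a simple direct summand $B\ds M$ with $M/A\cong B$, which in turn forces $\Soc(T_p(M))\nsubseteq\Rad(T_p(M))$; for an arbitrary maximal subgroup $A$ no such $B$ exists. Everything else in your write-up (the ``if'' direction, the identity $pM+T(M)=M$, the reduction to $A\cap T_p(M)\ds T_p(M)$, and the first case via Proposition \ref{lemma:torsinsdp}) is correct and matches the paper's intended route. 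Your proposed salvage is the right statement: when $M/T(M)$ has no maximal subgroups, Theorem \ref{thm:sdpAbelianGroups} collapses to ``$M$ is simple-direct-projective if and only if $T(M)$ is,'' and the corollary's conclusion holds only under an extra hypothesis such as $\Soc(T_p(M))\nsubseteq\Rad(T_p(M))$ for each prime $p$ with $pM\neq M$.
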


\begin{proof} Sufficiency is clear. To prove the necessity, let $A$ be a maximal subgroup of $M$. Suppose $\frac{M}{A}\cong \Z_p$, where $p \in \Omega$. Then $pM \subseteq A$. Since $\frac{M}{T(M)}$ has no maximal subgroups and $A$ is maximal, $A+T(M)=M$. Now, by the proof of [Theorem \ref{thm:sdpAbelianGroups}, $(2)\Rightarrow (1)$], $A \ds M$. This completes the proof.
\end{proof}

Over local rings, simple-direct-projective modules are exactly the modules given in Lemma \ref{lemma:extremesocle}.

\begin{proposition}\label{prop:sdpoverlocalrings} Let $R$ be a local ring. A right module $M$ is simple-direct-projective iff \\ $(i)$ $\Soc(M) \subseteq \Rad (M)$, or \\ $(ii)$ $\frac{M}{\Soc (M)}$ has no maximal submodules.
\end{proposition}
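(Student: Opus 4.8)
The plan is to exploit the fact that over a local ring $R$ every simple right module is isomorphic to $R/J(R)$, so that all simple submodules and all simple quotients of $M$ are mutually isomorphic. The sufficiency direction is already Lemma~\ref{lemma:extremesocle}, so only the necessity requires work: assuming $M$ is simple-direct-projective, I would suppose that $(i)$ fails and deduce $(ii)$. If $\Soc(M)\nsubseteq\Rad(M)$, pick a simple submodule $S$ with $S\nsubseteq\Rad(M)$; since a simple submodule is either small or a summand, $S\ds M$. The point of locality is that this single $S$ is (up to isomorphism) the unique simple module over $R$, hence $M/A\cong S$ for \emph{every} maximal submodule $A$ of $M$.

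The key step is then the following: take any maximal submodule $A$ of $M$ with $\Soc(M)\subseteq A$, so that $M/A\cong S\ds M$; simple-direct-projectivity gives $A\ds M$, say $M=A\oplus C$ with $C\cong M/A$ simple. But then $C\subseteq\Soc(M)\subseteq A$, forcing $C=0$, a contradiction. Hence no maximal submodule of $M$ can contain $\Soc(M)$, which is exactly the statement that $M/\Soc(M)$ has no maximal submodules, i.e.\ $(ii)$ holds. This is the same modular-law/contradiction mechanism already used in the proof of Proposition~\ref{lemma:torsinsdp}, $(2)\Rightarrow(3)$; here it is cleaner because there is only one isomorphism type of simple module to track.

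I expect the main (minor) obstacle to be verifying carefully that a maximal submodule $A$ containing $\Soc(M)$ actually exists whenever $M/\Soc(M)$ has a maximal submodule, and that the correspondence $M/A\cong S$ is justified purely by $R$ being local (every simple $R$-module $\cong R/J(R)$); both are routine. One should also note the degenerate cases—if $M$ has no maximal submodules at all then $(ii)$ holds vacuously, and if $\Soc(M)=M$ then locality forces $M$ to be a direct sum of copies of $S$, in which case simple-direct-projectivity pins things down directly—but these fold into the general argument without separate treatment.
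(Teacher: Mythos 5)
Your proposal is correct and follows essentially the same route as the paper's proof: sufficiency via Lemma~\ref{lemma:extremesocle}, and for necessity, using that $S\ds M$ since it is not small, that locality makes every simple quotient isomorphic to $S$, and then deriving a contradiction from a maximal submodule containing $\Soc(M)$ being a direct summand. No gaps.
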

\begin{proof}
Suppose $(i)$ does not hold. Then there is a simple submodule $S$ of $M$ such that $M=N \oplus S$. Let $K$ be a maximal submodule of $M$. Since $R$ is a local ring, $R$ has a unique simple module up to isomorphism. Thus $\frac{M}{K}\cong S \ds M$. Hence simple-direct projectivity of $M$ implies that,  $K\ds M$. Thus any maximal submodule of $M$ is a direct summand. Now, if $L$ is a maximal submodule of $M$, such that $\Soc (M) \subseteq L \subseteq M$, then $M=L\oplus S'$ with $S'$ a simple submodule of $M$. Then $S' \subseteq \Soc (M) \subseteq L$, a contradiction. Hence $\frac{M}{\Soc(M)}$ has no maximal submodules. This proves the necessity.

Sufficiency is clear by Lemma \ref{lemma:extremesocle}.

\end{proof}

Over a right perfect ring, every module has a maximal submodule (\cite[Theorem 28.4]{Anderson-Fuller:RingsandCategoriesofModules}). Hence the following is a consequence of Proposition \ref{prop:sdpoverlocalrings}.

\begin{corollary} Let $R$ be a local right perfect ring. A right module $M$ is simple-direct-projective iff $M$ is semisimple or $\Soc (M) \subseteq \Rad (M)$.
\end{corollary}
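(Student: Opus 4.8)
The plan is to derive this corollary directly from Proposition~\ref{prop:sdpoverlocalrings}, using the extra hypothesis that $R$ is right perfect to simplify condition $(ii)$. By Proposition~\ref{prop:sdpoverlocalrings}, for a local ring a right module $M$ is simple-direct-projective if and only if $\Soc(M)\subseteq\Rad(M)$ or $\frac{M}{\Soc(M)}$ has no maximal submodules. So the whole task reduces to showing that, when $R$ is right perfect, the condition ``$\frac{M}{\Soc(M)}$ has no maximal submodules'' is equivalent to ``$M$ is semisimple.''

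First I would recall that over a right perfect ring every nonzero right module has a maximal submodule, by \cite[Theorem 28.4]{Anderson-Fuller:RingsandCategoriesofModules}, as noted just before the statement. Hence $\frac{M}{\Soc(M)}$ has no maximal submodules precisely when $\frac{M}{\Soc(M)}=0$, i.e. when $M=\Soc(M)$, which is exactly the assertion that $M$ is semisimple. This gives the equivalence: $M$ is simple-direct-projective iff $\Soc(M)\subseteq\Rad(M)$ or $M$ is semisimple, which is the claimed statement.

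A small point worth stating explicitly in the write-up is the direction ``$M$ semisimple $\Rightarrow$ $M$ simple-direct-projective'': this falls under case $(ii)$ of Proposition~\ref{prop:sdpoverlocalrings} since then $\frac{M}{\Soc(M)}=0$ trivially has no maximal submodules (or one may simply invoke Lemma~\ref{lemma:extremesocle}). Conversely, if $M$ is simple-direct-projective and $\Soc(M)\not\subseteq\Rad(M)$, then by Proposition~\ref{prop:sdpoverlocalrings} the quotient $\frac{M}{\Soc(M)}$ has no maximal submodules, and by right perfectness this forces $\frac{M}{\Soc(M)}=0$, so $M$ is semisimple.

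There is essentially no obstacle here; the corollary is a routine specialization. The only thing to be careful about is invoking right perfectness at exactly the right spot — namely to convert ``no maximal submodules'' into ``zero'' — and making sure the local hypothesis is still in force so that Proposition~\ref{prop:sdpoverlocalrings} applies. A ``local right perfect ring'' is in particular local, so that is immediate.
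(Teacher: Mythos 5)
Your proposal is correct and follows exactly the paper's route: the corollary is stated as an immediate consequence of Proposition~\ref{prop:sdpoverlocalrings} together with the fact that over a right perfect ring every nonzero module has a maximal submodule, so that ``$\frac{M}{\Soc(M)}$ has no maximal submodules'' collapses to ``$M=\Soc(M)$''. Nothing further is needed.
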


It is easy to see that every module $M$ with $\Rad (M)=0$ is simple-direct-injective (see \cite[Remark 4.5]{sdp1}).  The following is the corresponding result for simple-direct-projective modules. Note that, a finitely generated module $M$ is semisimple iff every maximal submodule of $M$ is a direct summand.

\begin{proposition}\label{proposition:R semilocalzeroradicalaresdi}The following are equivalent for a ring $R$.
\begin{enumerate}
\item $R$ semilocal.
\item Every right module $M$ with $\Rad (M)=0$ is simple-direct-projective.
\item Every left module $M$ with $\Rad(M)=0$ is simple-direct-projective.
\end{enumerate}
\end{proposition}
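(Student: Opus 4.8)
The plan is to prove the equivalences in a cycle, with the implication $(1)\Rightarrow(2)$ being the heart of the matter, and $(2)\Leftrightarrow(3)$ following by a left-right symmetric argument (or by noting the statement is self-dual in the obvious sense). For $(1)\Rightarrow(2)$, suppose $R$ is semilocal, so $R/J(R)$ is a semisimple ring. Let $M$ be a right module with $\Rad(M)=0$, and suppose $A,B\subseteq M$ with $B$ simple and $M/A\cong B$ a direct summand of $M$. I must show $A\ds M$. The key observation is that $\Rad(M)=0$ forces $MJ(R)=0$: indeed $MJ(R)\subseteq\Rad(M)$ always holds (since $M$ being a homomorphic image of a free module, $MJ(R)$ is contained in the radical), so $MJ(R)=0$ and hence $M$ is a module over the semisimple ring $R/J(R)$. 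But every module over a semisimple ring is semisimple, so $M$ is semisimple as an $R/J(R)$-module, hence as an $R$-module. In a semisimple module every submodule is a direct summand, so in particular $A\ds M$. Therefore $M$ is simple-direct-projective (in fact trivially so, since \emph{every} submodule is a summand).

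For $(2)\Rightarrow(1)$, I argue by contrapositive: assume $R$ is not semilocal and produce a right module $M$ with $\Rad(M)=0$ that is not simple-direct-projective. Since $R$ is not semilocal, $R/J(R)$ is not semisimple, so it is not a semisimple ring; equivalently, $R/J(R)$ (as a right module over itself) is not semisimple, meaning it has a maximal right submodule that is not a direct summand, or more usefully, there is a simple right $R$-module $S$ and an epimorphism onto $S$ from some module that does not split. A cleaner route: if $R$ is not semilocal, then by a standard result $R/J(R)$ has infinitely many pairwise non-isomorphic simple modules, or some simple module has infinite multiplicity is not quite what I want — instead I recall that $R$ is semilocal iff every product of simple right modules is semisimple, or iff $R/J(R)$ is left (equivalently right) Artinian. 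The concrete construction I would use: take a simple right $R$-module $S$ (with $SJ(R)=0$) and consider $M=S^{(\Lambda)}\oplus(\text{something})$ — but this is semisimple and hence simple-direct-projective, so that fails.

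The correct obstruction must come from a module with zero radical that genuinely fails the splitting condition, and for that I should look at modules of the form $M=N$ where $N$ has a non-split exact sequence $0\to A\to N\to S\to 0$ with $S$ simple and $S$ embedding in $N$ as a summand. Since $R$ is not semilocal, $R/J(R)$ is not Artinian, so there is a strictly descending (or ascending) chain of submodules; using this one builds a module $M$ with $\Rad(M)=0$ (e.g.\ a submodule of a product of simples, so automatically radical-zero) containing a copy of a simple $S$ as a summand and a submodule $A$ with $M/A\cong S$ but $A$ not a summand. The hard part will be the precise choice of this $M$: I expect to use the characterization that $R$ is semilocal iff for every simple module $S$, the module $S^{(\N)}$ has the property that... — or, most likely, to invoke \cite[Remark 4.5]{sdp1} or a known dual statement. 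In fact the most economical argument: if $R$ is not semilocal then $R/J(R)$ is not semisimple, so $R/J(R)$ as a right $R$-module has zero radical but is not semisimple, hence has a maximal submodule $A$ that is not a direct summand; and since $(R/J(R))/A\cong S$ is simple and $S$ embeds as a summand of $R/J(R)$ (because $S$, being a simple $R/J(R)$-module, is a direct summand of the semisimple socle... ) — wait, $R/J(R)$ need not be semisimple, so $S$ need not embed. The genuine obstacle, then, is ensuring the simple quotient $S$ also sits inside $M$ as a direct summand; I would fix this by enlarging $M$ to $M'=(R/J(R))\oplus S$, which still has zero radical, and check that the submodule $A\oplus 0$ witnesses non-simple-direct-projectivity since $M'/(A\oplus 0)\cong S\oplus S$ — no, that is $S$ only if I quotient correctly; take $A'=\{(a,0):a\in A\}\oplus\{(s,s):s\in S'\}$ suitably so that $M'/A'\cong S$ and $A'$ is not a summand. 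Nailing down this construction, and verifying $A'$ is not a summand (which should follow from $A$ not being a summand of $R/J(R)$ via a modular-law / projection argument), is the step I expect to require the most care.
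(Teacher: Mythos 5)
Your $(1)\Rightarrow(2)$ is correct and is essentially the paper's argument: over a semilocal ring, $\Rad(M)=0$ forces $MJ(R)=0$, so $M$ is a module over the semisimple ring $R/J(R)$, hence semisimple, and semisimple modules are trivially simple-direct-projective. The equivalence $(1)\Leftrightarrow(3)$ via left--right symmetry of semilocality is also exactly what the paper does. The genuine gap is in $(2)\Rightarrow(1)$, and it sits precisely at the step you yourself flagged as unresolved. Write $\bar R=R/J(R)$ and let $A$ be a maximal submodule of $\bar R$ that is not a direct summand (this exists because $\bar R$ is cyclic with zero radical and not semisimple; you are tacitly using the fact, stated in the paper just before the proposition, that a finitely generated module all of whose maximal submodules are summands is semisimple). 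Your candidate witness $A'=\{(a,0):a\in A\}\oplus\{(s,s):s\in S'\}$ inside $M'=\bar R\oplus S$ does not work: first, it presupposes a copy $S'$ of $S$ inside $\bar R$, which need not exist --- this is the very obstacle you identified earlier (and if such an $S'$ existed with $S'\nsubseteq A$, then $\bar R=A\oplus S'$ and $A$ would be a summand after all); second, any graph-type submodule $\{(x,\varphi(x)):x\in \bar R\}$ attached to a homomorphism $\varphi:\bar R\to S$ is automatically a complement of $0\oplus S$ in $\bar R\oplus S$, hence a direct summand, so no diagonal construction can ever witness failure of simple-direct-projectivity.

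The repair --- and it is exactly what the paper does --- is to put the \emph{entire} external copy of $S$ inside the test submodule: take $A''=A\oplus S\subseteq M'=\bar R\oplus S$. Then $\Rad(M')=0$, $M'/A''\cong \bar R/A\cong S\cong 0\oplus S\ds M'$, so $(2)$ forces $A''\ds M'$, say $M'=(A\oplus 0)\oplus(0\oplus S)\oplus C$ with $C$ simple; intersecting with $\bar R\oplus 0$ and applying the modular law (legitimate because $A\oplus 0\subseteq \bar R\oplus 0$) gives $A\ds \bar R$, a contradiction. The paper runs this same computation in slightly greater generality: it adjoins $N=\oplus_{S\in\mathcal C}S$, one copy of each simple module, to an arbitrary zero-radical module $M$, tests the submodule $N\oplus K$ for $K$ maximal in $M$, and only then specializes to $M=\bar R$. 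So your overall strategy is the right one, but as written the proof of $(2)\Rightarrow(1)$ is incomplete until the witness submodule is corrected as above.
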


\begin{proof} $(1) \Rightarrow (2)$ Since $R$ is semilocal, every right module with $\Rad(M)=0$ is semisimple (see \cite[Corollary 15.18]{Anderson-Fuller:RingsandCategoriesofModules}). Semisimple modules are simple-direct-projective. Thus $(2)$ follows.

$(2) \Rightarrow (1)$ Let $\mathcal{C}$ be a complete  set of representatives of nonisomorphic simple right modules. Let $N=\oplus_{S \in C}S$. Let $M$ be a right module with $\Rad (M)=0$. We shall prove that every maximal submodule of $M$ is a direct summand. Clearly, $\Rad (N\oplus M)=\Rad(N)\oplus \Rad(M)=0$, and so $N\oplus M$ is simple-direct-projective by $(2).$ Let $K$ be a maximal submodule of $M$. Then $N \oplus K$ is a maximal submodule of $N\oplus M$, and $\frac{N\oplus M}{N \oplus K} \cong S \ds N\oplus M$ for some simple submodule $S \in \mathcal{C}$. Thus $N\oplus K \ds N \oplus M$, i.e. $N \oplus M=(N \oplus K) \oplus L$ for some $L \subseteq N\oplus M$. By modular law, we get $M=K\oplus (M \cap(N \oplus L))$, i.e. $K \ds M$.  Now for the finitely generated module $M=\frac{R}{J(R)}$, we have $\Rad (\frac{R}{J(R)})=0$. Thus, by what we have shown, every maximal submodule of $\frac{R}{J(R)}$ is a direct summand. Hence $\frac{R}{J(R)}$ is semisimple, i.e. $R$ is semilocal.

$(1) \Leftrightarrow (3)$ Being semilocal is left right symmetric. Hence this is clear from the equivalence of $(1)$ and $(2)$.
\end{proof}

\section{Simple-direct-injective modules}

In this section, we give a characterization of simple-direct-injective modules over the ring of integers and over semilocal rings. Nonsingular right modules are simple-direct-projective over any ring (\cite[Example 2.5(2)]{sdp1}). Motivated by this fact, we  obtain a characterization of the rings whose nonsingular right modules are simple-direct-injective.

Following \cite{sdi}, a right module $M$ is called {\it simple-direct-injective} if, whenever $A$ and $B$ are simple submodules of $M$ with $A\cong B$ and $B\ds M$ we have $A \ds M$.

The following lemma is well-known. We do not know a proper reference, we include the proof for completeness.

\begin{lemma}\label{lemma:pure-injectivequotient rings} Let $R$ be a ring and $I$ a two sided ideal of $R$. Then any pure-injective right $\frac{R}{I}$-module is pure-injective as an $R$-module.
\end{lemma}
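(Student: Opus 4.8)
Let $R$ be a ring, $I$ a two-sided ideal of $R$, and $E$ a pure-injective right $\frac{R}{I}$-module. Then $E$ is pure-injective as an $R$-module.

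**The plan.** The strategy is to take an arbitrary pure monomorphism $f : M \to N$ of right $R$-modules together with an $R$-homomorphism $g : M \to E$, and to produce an extension $h : N \to E$. The key observation is that $E$ is annihilated by $I$, so $g$ factors through the canonical surjection $M \to \frac{M}{MI}$. I would therefore pass to the category of $\frac{R}{I}$-modules: set $\overline{M} = \frac{M}{MI}$, $\overline{N} = \frac{N}{NI}$, let $\overline{f} : \overline{M} \to \overline{N}$ be the induced map, and let $\overline{g} : \overline{M} \to E$ be the $\frac{R}{I}$-homomorphism through which $g$ factors. If I can show that $\overline{f}$ is a pure monomorphism of $\frac{R}{I}$-modules, then pure-injectivity of $E$ over $\frac{R}{I}$ gives an extension $\overline{h} : \overline{N} \to E$, and composing with $N \to \overline{N}$ yields the desired $h : N \to E$; one checks $h \circ f = g$ by chasing the diagram through the quotient maps.

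**The two things to verify.** First, that $\overline{f}$ is injective. This uses purity of $f$ in an essential way: reducing mod $I$ need not preserve injectivity in general, but applying $- \otimes_R \frac{R}{I}$ to the pure exact sequence $0 \to M \to N \to N/M \to 0$ keeps it exact precisely because $f$ is pure (purity means $\Tor_1^R(N/M, \frac{R}{I}) \to$ the relevant map stays monic; more directly, a pure monomorphism remains monic after tensoring with \emph{any} left module, and $\frac{R}{I}$ is an $(R,R)$-bimodule, so $M \otimes_R \frac{R}{I} \to N \otimes_R \frac{R}{I}$ is monic, which is exactly $\overline{M} \to \overline{N}$). Second, that $\overline{f}$ is \emph{pure} as a map of $\frac{R}{I}$-modules: for every left $\frac{R}{I}$-module $L$, the map $\overline{M} \otimes_{\frac{R}{I}} L \to \overline{N} \otimes_{\frac{R}{I}} L$ must be monic. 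Here one uses the standard identification $\overline{M} \otimes_{\frac{R}{I}} L \cong M \otimes_R L$ (valid because $L$ is already an $\frac{R}{I}$-module, so $MI \otimes L$ dies), and similarly for $N$; under these identifications the map becomes $f \otimes_R 1_L : M \otimes_R L \to N \otimes_R L$, which is monic because $f$ is a pure monomorphism of $R$-modules and $L$ is in particular a left $R$-module.

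**The main obstacle.** The only real subtlety is the first point — that purity of $f$ over $R$ survives base change to $\frac{R}{I}$ to give injectivity of $\overline{f}$; this is where the hypothesis that $f$ is pure (not merely mono) is used, and it is worth stating cleanly, since for an arbitrary monomorphism the reduction mod $I$ can fail to be injective. Everything else is a routine diagram chase plus the elementary tensor identity $\frac{M}{MI} \otimes_{\frac{R}{I}} L \cong M \otimes_R L$ for $\frac{R}{I}$-modules $L$. So the write-up will consist of: (1) factor $g$ through $\overline{M}$; (2) observe $\overline{f}$ is monic by purity; (3) observe $\overline{f}$ is $\frac{R}{I}$-pure via the tensor identification; (4) invoke pure-injectivity of $E$ over $\frac{R}{I}$ and pull the extension back along $N \to \overline{N}$.
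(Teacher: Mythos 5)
Your proposal is correct and follows essentially the same route as the paper: factor $g$ through $M/MI$, check that the induced map $M/MI \to N/NI$ is a pure monomorphism of $\frac{R}{I}$-modules, apply pure-injectivity of $E$ over $\frac{R}{I}$, and compose the resulting extension with the quotient map $N \to N/NI$. The only cosmetic difference is that you verify purity of the induced map via the tensor identity $\frac{M}{MI}\otimes_{\frac{R}{I}} L \cong M\otimes_R L$, whereas the paper invokes the element-wise fact $AI = A\cap BI$ for a pure submodule $A\subseteq B$ (citing Lam); these amount to the same thing, and your justification of the purity of the induced map is, if anything, slightly more explicit than the paper's.
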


\begin{proof} Let $M$ be a pure-injective right $\frac{R}{I}$-module. Let $B$ be a right $R$-module, and $A$ a pure submodule of $B$. Let $i: A\to B$ be the inclusion map.  Then by \cite[Corollary 4.92]{Lam:LecturesOnModulesAndRings} $AI=A \cap BI$. Thus the natural map  $j: \frac{A}{AI} \to \frac{B}{BI}$ given by $j(a +AI)=a + BI$ is a pure monomorphism. In order to show that $M$ is a pure-injective $R$-module, let $f: A \to M$ be an $R$-homomorphism. Then $f(AI)=f(A)I \subseteq MI=0$. Thus $AI \subseteq \Ker(f)$, and so $f= \overline{f} \pi$, where $\pi :A \to \frac{A}{AI}$ is the natural epimorphism, and $\overline{f}: \frac{A}{AI} \to M$ is the homomorphism induced by $f$, i.e. $\overline{f}(a +AI)=f(a)$ for each $a \in A$. Since $M$ is a pure-injective $\frac{R}{I}$-module, there is homomorphism $g: \frac{B}{BI} \to M$ such that $\overline{f}=gj$. Let $\pi ' : B \to \frac{B}{BI}$ be the natural epimorphism. For $\phi = g \pi '$, it is straightforward to check that, $\phi i=f$, i.e. $\phi$ extends $f$ and so $M$ is a pure-injective $R$-module.
\end{proof}



\begin{lemma}\label{lemma:puresubmodules.d.i.} Let $R$ be a commutative ring. Let $M$ be an $R$-module and $N$ a pure submodule of $M$. If $M$ is simple-direct-injective, then $N$ is simple-direct-injective. The converse is true if $\Soc(M) \subseteq N$.
\end{lemma}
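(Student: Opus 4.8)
The plan is to prove the two implications separately, using the structure of simple submodules and the pure-injective machinery of the preceding lemma only where needed. For the forward direction, suppose $M$ is simple-direct-injective and let $A, B$ be simple submodules of $N$ with $A \cong B$ and $B \ds N$. First I would note that a simple direct summand $B$ of a pure submodule $N$ is itself a pure submodule of $M$ (composition of pure monomorphisms), hence $B$ is a pure-injective $R$-module: indeed $B$ is a simple module over a commutative ring, so $B \cong R/\mathfrak{m}$ for a maximal ideal $\mathfrak{m}$, and simple modules over commutative rings are pure-injective (e.g.\ via Lemma \ref{lemma:pure-injectivequotient rings}, since $R/\mathfrak{m}$ is a field, every module over it is pure-injective, hence pure-injective over $R$). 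A pure-injective submodule which is pure in $M$ is a direct summand of $M$, so $B \ds M$. Since $M$ is simple-direct-injective and $A \cong B$ with $A$ a simple submodule of $M$, we get $A \ds M$, say $M = A \oplus C$. Intersecting with $N$ and using the modular law together with $A \subseteq N$ gives $N = A \oplus (C \cap N)$, so $A \ds N$. Hence $N$ is simple-direct-injective.

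For the converse, assume $N$ is simple-direct-injective and $\Soc(M) \subseteq N$. Let $A, B$ be simple submodules of $M$ with $A \cong B$ and $B \ds M$. By hypothesis $A, B \subseteq \Soc(M) \subseteq N$. Write $M = B \oplus M'$. Then $B$ is a direct summand of $M$ lying in $N$, so by the modular law $N = B \oplus (M' \cap N)$, i.e.\ $B \ds N$. Now $A$ and $B$ are simple submodules of $N$ with $A \cong B$ and $B \ds N$, so simple-direct-injectivity of $N$ gives $A \ds N$, say $N = A \oplus N'$. It remains to lift this to $M$: since $A$ is a simple submodule of $M$, either $A \ll M$ or $A \ds M$; but $A$ is a direct summand of the (pure, hence in particular) submodule $N$, so $A$ is not small in $N$, and I would check that this prevents $A$ from being small in $M$ — if $A \ll M$ then $A \ll N$ by, e.g., \cite[Lemma 5.18]{Anderson-Fuller:RingsandCategoriesofModules}-type restriction arguments, contradiction. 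Hence $A \ds M$, and $M$ is simple-direct-injective.

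The step I expect to be the main obstacle is the converse direction, specifically passing from $A \ds N$ back to $A \ds M$: the cleanest route is the observation that a simple submodule of any module is either small or a direct summand, so it suffices to rule out $A \ll M$, and since $A$ is a nonzero direct summand of $N$ it is not small in $N$; one then needs that a submodule small in $M$ and contained in $N$ is small in $N$, which follows because if $A + L = N$ for some $L \subseteq N$ then $A + (L + (\text{any complement considerations}))$ — more directly, $A \ll M$ implies $A \ll N$ whenever... actually the safe statement is: if $A \ll M$ and $A \subseteq N \subseteq M$ then $A \ll N$, which holds in general. Alternatively, and perhaps more transparently, one can avoid this by using purity of $N$ and pure-injectivity of $A \cong R/\mathfrak m$ exactly as in the forward direction to conclude directly that $A$, being pure in $M$ (as a summand of the pure submodule $N$) and pure-injective, is a direct summand of $M$. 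I would present this pure-injectivity argument as the primary one in both directions, since it is uniform and does not rely on the smallness dichotomy.
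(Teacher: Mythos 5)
Your proposal is correct and, in the form you finally settle on (a simple direct summand of a pure submodule is pure in $M$, hence — being pure-injective as a simple module over a commutative ring — a direct summand of $M$, applied in both directions), it is essentially identical to the paper's proof; your derivation of the pure-injectivity of $R/\mathfrak{m}$ from Lemma \ref{lemma:pure-injectivequotient rings} is a legitimate substitute for the paper's citation of Cheatham--Smith. One caution about the smallness route you sketch and then abandon: the claim that $A \ll M$ and $A \subseteq N \subseteq M$ imply $A \ll N$ is false in general (take $A=N=p\Z/p^2\Z \subseteq M=\Z/p^2\Z$; smallness passes \emph{up} from $N$ to $M$, and passes down only under extra hypotheses such as $N \ds M$), so you were right to make the pure-injectivity argument the primary one.
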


\begin{proof} Suppose $M$ is a simple-direct-injective module and $N$ a pure submodule of $M$. Let $S_1 \cong S_2$ with $S_1, \,S_2$ simple submodules of $N$ and $S_1 \ds N$. Now $S_1$ is pure in $N$, and $N$ is pure in $M$. Then $S_1$ is pure in $M$ by \cite[pages:39 and 43]{FuchsAndSalce:ModulesOverNonNoetherianDomains}. Since $R$ is commutative, simple modules are pure-injective by \cite[Corollary 4]{characterofsimple}.  Being pure and pure-injective implies $S_1\ds M$. Therefore $S_2 \ds M$, because $M$ is simple-direct-injective. Hence $S_2 \ds N$, and so $N$ is simple-direct-injective.

Now assume that, $N$ is a pure submodule of $M$, and $\Soc (M) \subseteq N$. Let $S_1 \cong S_2$ be two simple submodules of $M$ and $S_1 \ds M$. Then $S_1 \subseteq N$, $S_2 \subseteq N$ and $S_1 \ds N$. Since $N$ is simple-direct-injective, $S_2 \ds N$. As $S_2$ is pure in $N$ and $N$ is pure in $M$, $S_2$ is pure in $M$. Then $S_2 \ds M$, because $S_2$ is both pure-injective and pure in $M$. Hence $M$ is simple-direct-injective.
\end{proof}

A right module $M$ is called {\it absolutely pure} if it is pure in every module containing it as a submodule.

\begin{corollary} Let $R$ be a commutative ring and $M$ be an absolutely pure module. Then each module $K$ such that $M \subseteq K \subseteq E(M)$ is simple-direct-injective. In particular, absolutely pure modules are simple-direct-injective.
\end{corollary}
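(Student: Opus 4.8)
The plan is to reduce the statement to Lemma \ref{lemma:puresubmodules.d.i.}, using two elementary facts: injective modules are simple-direct-injective, and $\Soc(K)=\Soc(M)$ for every $K$ with $M\subseteq K\subseteq E(M)$.

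First I would observe that $E(M)$ is simple-direct-injective. If $S_1\cong S_2$ are simple submodules of $E(M)$ with $S_2\ds E(M)$, then $S_2$ is a direct summand of an injective module, hence injective; so $S_1\cong S_2$ is injective, and since $S_1\subseteq E(M)$ with $S_1$ injective, we get $S_1\ds E(M)$. As $M$ is absolutely pure, $M$ is a pure submodule of $E(M)$, so the first part of Lemma \ref{lemma:puresubmodules.d.i.} yields that $M$ itself is simple-direct-injective; taking $K=M$ this already gives the ``in particular'' clause.

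For a general $K$ with $M\subseteq K\subseteq E(M)$, I would argue as follows. Since $M$ is essential in $E(M)$ it is essential in $K$, so every simple submodule of $K$ has nonzero intersection with $M$ and hence is contained in $M$; thus $\Soc(K)=\Soc(M)\subseteq M$. Also, because $M$ is absolutely pure, $M$ is a pure submodule of $K$. Now the converse part of Lemma \ref{lemma:puresubmodules.d.i.}, applied with ambient module $K$ and pure submodule $M$, gives that $K$ is simple-direct-injective, since $M$ is simple-direct-injective and $\Soc(K)\subseteq M$.

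I do not expect a genuine obstacle here; the only points needing care are the identification $\Soc(K)=\Soc(M)$ (which is exactly what supplies the hypothesis ``$\Soc(M)\subseteq N$'' of Lemma \ref{lemma:puresubmodules.d.i.}), the remark that $M$ is automatically pure in $K$ by absolute purity, and the routine verification that an injective module is simple-direct-injective.
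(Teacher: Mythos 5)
Your proposal is correct and follows essentially the same route as the paper: apply the first part of Lemma \ref{lemma:puresubmodules.d.i.} to the pure embedding $M\subseteq E(M)$ to get that $M$ is simple-direct-injective, then apply the converse part to the pure embedding $M\subseteq K$ using $\Soc(K)=\Soc(M)$ from essentiality. The only difference is that you spell out why $E(M)$ is simple-direct-injective, which the paper leaves implicit.
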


\begin{proof} Since $M$ is a pure submodule of $E(M)$ and $E(M)$ is simple-direct-injective, $M$ is simple-direct-injective by Lemma \ref{lemma:puresubmodules.d.i.}. As $M$ is essential in $E(M)$, $\Soc(M)=\Soc(K)$ for each module $K$ such that $M \subseteq K \subseteq E(M)$. Then $K$ is simple-direct-injective, again by Lemma \ref{lemma:puresubmodules.d.i.}.
\end{proof}

A commutative domain $R$ is called Pr\"{u}fer domain if each finitely generated ideal of $R$ is projective.

\begin{corollary}\label{cor:prufertorsionsdiiffMissdi} Let $R$ be a Pr\"{u}fer domain. A module $M$ is simple-direct-injective iff the torsion submodule $T(M)$ of $M$ is simple-direct-injective.
\end{corollary}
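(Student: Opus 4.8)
The statement to prove is Corollary \ref{cor:prufertorsionsdiiffMissdi}: over a Prüfer domain $R$, a module $M$ is simple-direct-injective iff its torsion submodule $T(M)$ is simple-direct-injective. The natural strategy is to invoke Lemma \ref{lemma:puresubmodules.d.i.}, since over a commutative ring a pure submodule of a simple-direct-injective module is simple-direct-injective, and the converse holds once the socle is contained in the submodule. So the plan reduces to two observations: first, $T(M)$ is a pure submodule of $M$; second, $\Soc(M) \subseteq T(M)$.

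First I would verify that $T(M)$ is pure in $M$. This is standard for modules over a Prüfer domain: since $R$ is Prüfer, $M/T(M)$ is torsion-free, and over a Prüfer domain torsion-free modules are flat (finitely generated ideals being projective, hence flat, gives that $R$ is coherent and torsion-free equals flat). A short exact sequence with flat cokernel is pure, so $0 \to T(M) \to M \to M/T(M) \to 0$ is pure-exact, which is exactly the assertion that $T(M)$ is a pure submodule of $M$. Alternatively one can cite \cite[Proposition 8.12]{FuchsAndSalce:ModulesOverNonNoetherianDomains} as used in the proof of Lemma \ref{lemma:torsionsubmodulecoclosed}, which gives purity of the torsion submodule in greater generality.

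Second I would note that every simple $R$-module is torsion: a simple module is $R/\mathfrak{m}$ for a maximal ideal $\mathfrak{m}$, and since $R$ is a domain that is not a field in the nontrivial case (if $R$ is a field everything is trivial), $\mathfrak{m} \neq 0$, so any nonzero $r \in \mathfrak{m}$ annihilates $R/\mathfrak{m}$, making it torsion. Hence $\Soc(M) \subseteq T(M)$. With these two facts in hand, the forward direction ($M$ simple-direct-injective $\Rightarrow$ $T(M)$ simple-direct-injective) is immediate from the first half of Lemma \ref{lemma:puresubmodules.d.i.}, and the converse ($T(M)$ simple-direct-injective $\Rightarrow$ $M$ simple-direct-injective) follows from the second half, since $\Soc(M) \subseteq T(M)$.

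I do not anticipate a serious obstacle here; the only point requiring a little care is the purity of $T(M)$, and that is a well-documented property of torsion submodules over Prüfer (indeed over any commutative) domains, so the proof is essentially a two-line application of the preceding lemma. I would write it as: "Over a Prüfer domain, $T(M)$ is a pure submodule of $M$ (see \cite{FuchsAndSalce:ModulesOverNonNoetherianDomains}), and since every simple module is torsion we have $\Soc(M) \subseteq T(M)$; the claim now follows from Lemma \ref{lemma:puresubmodules.d.i.}."
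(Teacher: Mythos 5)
Your proof is correct and follows the paper's argument exactly: purity of $T(M)$ via \cite[Proposition 8.12]{FuchsAndSalce:ModulesOverNonNoetherianDomains}, the observation that simple modules are torsion so $\Soc(M)\subseteq T(M)$, and then Lemma \ref{lemma:puresubmodules.d.i.}. The extra remark on flatness of torsion-free modules over Prüfer domains is a fine alternative justification of purity but is not needed.
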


\begin{proof} Let $M$ be an $R$-module. Then $T(M)$ is pure in $M$ by \cite[Proposition 8.12]{FuchsAndSalce:ModulesOverNonNoetherianDomains}. Since simple modules are torsion, $\Soc (M) \subseteq T(M)$. Now the proof is clear by Lemma \ref{lemma:puresubmodules.d.i.}.
\end{proof}

\begin{lemma}\label{lemma:coclosedsubmodules.d.i.} Let  $M$ be an $R$-module and $N$ a coclosed submodule of $M$. If $M$ is simple-direct-injective, then $N$ is simple-direct-injective. The converse is true if $\Soc(M) \subseteq N$.
\end{lemma}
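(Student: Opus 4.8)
The plan is to mirror the proofs of Lemma~\ref{lemma:coclosedsdp} and Lemma~\ref{lemma:puresubmodules.d.i.}, with the role played there by purity and pure-injectivity now taken over by two facts about coclosed submodules: first, coclosedness is transitive and is inherited by direct summands (both recorded in \cite[3.7.(1)]{lifting}); second, as observed just before Lemma~\ref{lemma:coclosedsdp}, a simple submodule is either small or a direct summand, so a \emph{simple coclosed} submodule is automatically a direct summand. The only other tool needed is the modular law, which will be used to transfer a direct summand of $M$ that happens to lie inside $N$ down to a direct summand of $N$.

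For the forward implication, assume $M$ is simple-direct-injective and $N$ is coclosed in $M$. I would take simple submodules $S_1\cong S_2$ of $N$ with $S_1\ds N$ and aim to show $S_2\ds N$. Since $S_1$ is a direct summand of $N$ it is coclosed in $N$, hence coclosed in $M$ by \cite[3.7.(1)]{lifting}; being simple and coclosed, $S_1$ is not small in $M$, so $S_1\ds M$. Then simple-direct-injectivity of $M$ gives $S_2\ds M$, say $M=S_2\oplus M'$, and since $S_2\subseteq N$ the modular law yields $N=S_2\oplus(N\cap M')$, i.e. $S_2\ds N$. Hence $N$ is simple-direct-injective.

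For the converse, assume additionally $\Soc(M)\subseteq N$ and that $N$ is simple-direct-injective. Let $S_1\cong S_2$ be simple submodules of $M$ with $S_1\ds M$; both lie in $\Soc(M)\subseteq N$. Writing $M=S_1\oplus M'$ and applying the modular law gives $S_1=S_1\cap M$ split off inside $N$, so $S_1\ds N$; by hypothesis $S_2\ds N$, hence $S_2$ is coclosed in $N$ and therefore coclosed in $M$, and being simple it is a direct summand of $M$. Thus $M$ is simple-direct-injective.

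I do not expect a genuine obstacle here: the argument is essentially formal once one has the ``direct summand $\Rightarrow$ coclosed'' and transitivity statements from \cite[3.7.(1)]{lifting} and the ``simple $\Rightarrow$ small or summand'' dichotomy. The one place to be slightly careful is to check that all invocations of these facts are valid for arbitrary modules over an arbitrary ring (no finiteness or commutativity is used), and that in the converse the hypothesis $\Soc(M)\subseteq N$ is exactly what is needed to guarantee the given simple submodules of $M$ actually sit inside $N$.
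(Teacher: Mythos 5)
Your proof is correct and follows essentially the same route as the paper's: direct summands are coclosed, coclosedness passes from $N$ to $M$ by transitivity, a simple coclosed submodule cannot be small and hence splits off, and the modular law pulls summands of $M$ contained in $N$ back down to summands of $N$. The only difference is cosmetic — you spell out the modular-law steps that the paper leaves implicit.
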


\begin{proof} Suppose $M$ is simple-direct-injective and $N$ a coclosed submodule of $M$. Suppose $S_1 \cong S_2$ are simple submodules of $N$ and $S_1 \ds N$. Then $S_1$ is a coclosed submodule of $M$ by \cite[3.7. (6)]{lifting}. Thus $S_1$ is not small in $M$, and so $S_1 \ds M$. By simple-direct-injectivity of $M$, $S_2 \ds M$. Therefore $S_2 \ds N$, and $N$ is simple-direct-injective.

Now assume that, $N$ is a coclosed submodule of $M$, and $\Soc (M) \subseteq N$. Let $S_1 \cong S_2$ be two simple submodules of $M$ and $S_1 \ds M$. Then $S_1 \subseteq N$, $S_2 \subseteq N$ and $S_1 \ds N$. Since $N$ is simple-direct-injective, $S_2 \ds N$. As $S_2$ is coclosed in $N$ and $N$ is coclosed in $N$, $S_2$ is coclosed in $M$. Then $S_2 \ds M$, and so $M$ is simple-direct-injective.
\end{proof}

\begin{theorem} \label{lemma:sdiabeliangroups} Let $M$ be an abelian group. The following statements are equivalent.
\begin{enumerate}
\item[(1)] $M$ is simple-direct-injective.
\item[(2)] $T(M)$ is simple-direct-injective.
\item[(3)] $T_p(M)$ is simple-direct-injective for each $p \in \Omega$.
\item[(4)] For each  $p \in \Omega$, $T_p(M)$ is semisimple, or $\Soc(T_p(M)) \subseteq \Rad(T_p(M))$.

\end{enumerate}
\end{theorem}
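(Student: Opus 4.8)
The plan is to establish the cycle of implications $(1)\Rightarrow(2)\Rightarrow(3)\Rightarrow(4)\Rightarrow(1)$, leaning on the machinery already set up in Section~3 and on standard structure theory of abelian $p$-groups from Fuchs.

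For $(1)\Rightarrow(2)$, I would invoke Lemma~\ref{lemma:coclosedsubmodules.d.i.}: by Lemma~\ref{lemma:torsionsubmodulecoclosed} the torsion submodule $T(M)$ is coclosed in $M$, and since every simple abelian group is torsion we have $\Soc(M)\subseteq T(M)$; hence $T(M)$ inherits simple-direct-injectivity (in fact the converse also holds, so $(1)$ and $(2)$ are equivalent). For $(2)\Rightarrow(3)$, recall $T(M)=\oplus_{p\in\Omega}T_p(M)$ is a direct sum and each $T_p(M)$ is a direct summand; a direct summand of a simple-direct-injective module is again simple-direct-injective (this is the easy direct-summand closure — if $T_p(M)=N$ and $M=N\oplus N'$, a simple summand of $N$ is a simple summand of $M$, and a complement inside $M$ meets $N$ in a complement inside $N$ by modular law), so each $p$-component qualifies. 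Conversely $(3)\Rightarrow(2)$ would use that a simple submodule of $T(M)$ lies in a single $T_p(M)$, so simple-direct-injectivity is detected componentwise, giving the equivalence of $(2)$ and $(3)$.

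The core of the argument is the equivalence $(3)\Leftrightarrow(4)$ for a single $p$-group $G=T_p(G)$. For $(4)\Rightarrow(3)$: if $\Soc(G)\subseteq\Rad(G)=pG$ then $G$ has no simple summand (a simple summand would be a simple submodule not contained in $pG$, since a nonzero summand is never small), so simple-direct-injectivity is vacuous; if $G$ is semisimple then every submodule is a summand and we are done. For $(3)\Rightarrow(4)$, I would argue contrapositively: suppose $G$ is neither semisimple nor satisfies $\Soc(G)\subseteq pG$. From $\Soc(G)\not\subseteq pG$ we get a simple submodule $S_1\cong\Z_p$ with $S_1\not\subseteq pG$, and because a simple submodule is either small or a summand, $S_1$ is a direct summand of $G$. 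Since $G$ is not semisimple, $pG\neq 0$, so $pG$ contains an element of order $p$, giving a simple submodule $S_2\subseteq pG$ with $S_2\cong\Z_p\cong S_1$. Now $S_2$ cannot be a direct summand of $G$: any direct summand that is contained in $pG$ would, being a summand, split off and force the contradiction that a summand is small (equivalently, $\Soc(G)\cap pG$ would have to contain a summand, which is impossible when $S_2\ll G$). So $S_1\cong S_2$, $S_1\ds G$ but $S_2\not\ds G$, contradicting simple-direct-injectivity. Hence $(4)$ holds.

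The main obstacle is the delicate point in $(3)\Rightarrow(4)$: producing, inside $pG$, a simple submodule that is genuinely \emph{not} a direct summand, and justifying this cleanly. The key facts needed are that every simple submodule of an abelian group is either small or a direct summand (stated in the excerpt), that a nonzero direct summand is never small, and that when $G$ is not semisimple the socle is not all of $G$ so some simple submodule sits inside $\Rad(G)=pG$. One must take a little care that $G$ could be unbounded or have a complicated structure; but all that is used is the elementary observation that $pG\neq 0$ forces a copy of $\Z_p$ inside $pG$, together with the small-or-summand dichotomy, so no deep structure theory is actually required beyond what Lemma~\ref{lemma:torsionsubmodulecoclosed} and the preliminary remarks already provide.
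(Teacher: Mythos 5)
Your proof is correct, but it reaches the two nontrivial implications by a genuinely different route than the paper. For $(1)\Leftrightarrow(2)$ the paper goes through purity: $T(M)$ is pure in $M$, simple modules over a commutative ring are pure-injective, and the Pr\"ufer-domain corollary (Corollary~\ref{cor:prufertorsionsdiiffMissdi}, built on Lemma~\ref{lemma:puresubmodules.d.i.}) does the work; you instead use that $T(M)$ is coclosed (Lemma~\ref{lemma:torsionsubmodulecoclosed}) together with Lemma~\ref{lemma:coclosedsubmodules.d.i.} and $\Soc(M)\subseteq T(M)$ --- both routes are fully supported by results already proved in Section~3, and yours mirrors how the paper treats the simple-direct-projective case. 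The bigger divergence is in $(3)\Rightarrow(4)$: the paper takes $A$ to be the sum of all simple summands of $T_p(M)$, shows $A$ is pure (direct limit of pure subgroups) and bounded, invokes Fuchs' theorem on bounded pure subgroups to split $A$ off, and then shows the complement is zero by homogeneity. You avoid all of that structure theory by arguing contrapositively: if $G=T_p(M)$ is not semisimple then $pG\neq 0$, so $pG=\Rad(G)$ contains a copy $S_2$ of $\Z_p$, while $\Soc(G)\nsubseteq pG$ yields a copy $S_1$ of $\Z_p$ that is a direct summand; since $S_1\cong S_2$ but $S_2$ is not a summand, $G$ is not simple-direct-injective. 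This is shorter and more elementary, needing only the small-or-summand dichotomy and the homogeneity of $\Soc$ of a $p$-group. The one spot to tighten is your justification that $S_2$ is not a summand: the phrasing about ``forcing a summand to be small'' is muddled. The clean argument is either that a finitely generated (here simple) submodule of $\Rad(G)$ is small in $G$, hence not a nonzero summand, or that a decomposition $G=S_2\oplus C$ would give $\Rad(G)=\Rad(S_2)\oplus\Rad(C)\subseteq C$, forcing $S_2\subseteq C$ and hence $S_2=0$. With that repaired, the proof is complete.
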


\begin{proof}
$(1) \Leftrightarrow (2)$ By Corollary \ref{cor:prufertorsionsdiiffMissdi}.




$(2) \Rightarrow (3)$ is clear, since $T(M)=\oplus_{p \in \Omega} T_p(M)$ and simple-direct-injective modules are closed under direct summands.

$(3) \Rightarrow (4)$ Assume that $\Soc(T_p(M)) \nsubseteq \Rad(T_p(M))$ for some  $p \in \Omega$. Then there is a simple subgroup $S$ of $T_p(M)$ such that $S \ds T_p(M)$. Let $A$ be the sum of all simple summands of $T_p(M)$. Then any finitely generated submodule of $A$ is a direct summand (hence  pure subgroup) of $T_p(M)$ by \cite[Lemma 2.4 (1)]{sdi}. Since $A$ is a direct limit of its finitely generated subgroups and direct limit of pure subgroups is pure, $A$ is pure in $T_p(M)$. As $A$ is semisimple and $A \subseteq T_p(M)$, $pA=0$ i.e. $A$ is bounded. Then $A \ds T_p(M)$ by \cite[Theorem 27.5]{fuchs}. Let $T_p(M)=A \oplus B$. We claim that $B=0$. If $B \neq 0$, then $\Soc (B)\neq 0$. Let $U$ be a simple subgroup of $B$. Since $T_p(M)$ is a $p$-group, $\Soc (T_p(M))$ is homogeneous i.e. all simple subgroups of $T_p(M)$ are isomorphic. Thus $U \ds T_p(M)$. Then $U \subseteq A$, which is a contradiction. Therefore $B=0$, and so $T_p(M)=A$ is semisimple. This proves $(4)$.

$(4) \Rightarrow (2)$ Let $U$ and $V$ be simple submodules of $T(M)$ such that $U \cong V$ and $U \ds $. Then there is a $p \in \Omega$ such that $U \ds T_p(M)$. Thus $T_p(M)$ must be semisimple by $(3)$. Since $V \cong U$, $V \subseteq T_p(M)$. Hence $V \ds T(M)$, and so $T(M)$ is simple-direct-injective.
\end{proof}

Recall that, a ring $R$ is { \it semilocal} if $\frac{R}{J(R)}$ is semisimple Artinian.

\begin{proposition}\label{thm:semilocalsimpledirectinjective}
Let $R$ be a semilocal ring. For a right $R$-module $M$, let $S'$ be sum of all simple summands of $M$. The following are equivalent.
\begin{enumerate}
  \item $M$ is simple-direct-injective.
  \item $S'$ is fully invariant and pure submodule of $M$.
  \item $M=S' \oplus N$, and $S'$ is a fully invariant submodule of $M$.
\end{enumerate}
\end{proposition}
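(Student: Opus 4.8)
The plan is to establish the cycle $(1)\Rightarrow(2)\Rightarrow(3)\Rightarrow(1)$. The semilocality of $R$ enters only in the step $(2)\Rightarrow(3)$, where it guarantees, via Lemma~\ref{lemma:pure-injectivequotient rings}, that the semisimple submodule $S'$ is pure-injective as an $R$-module, so that its purity in $M$ upgrades to a direct sum decomposition. The other two implications are formal manipulations with endomorphisms of $M$ and with the semisimplicity of $S'$.

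For $(1)\Rightarrow(2)$, I would first check full invariance of $S'$: for $f\in\operatorname{End}(M)$ and a simple summand $S$ of $M$, the image $f(S)$ is either $0$ or a simple submodule of $M$ isomorphic to $S$, and in the latter case simple-direct-injectivity forces $f(S)\ds M$, so $f(S)\subseteq S'$; since $S'$ is the sum of all such $S$, we get $f(S')\subseteq S'$. For purity I would argue exactly as in the proof of Theorem~\ref{lemma:sdiabeliangroups}, $(3)\Rightarrow(4)$: by \cite[Lemma~2.4(1)]{sdi} every finitely generated submodule of $S'$ is a direct summand of $M$, hence pure in $M$; and $S'$, being the directed union of its finitely generated submodules, is then pure in $M$ because a directed union of pure submodules is pure.

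The implication $(2)\Rightarrow(3)$ is the heart of the matter. Since $S'$ is semisimple, $S'\,J(R)=0$, so $S'$ is a right $R/J(R)$-module. As $R$ is semilocal, $R/J(R)$ is semisimple Artinian, so every $R/J(R)$-module is injective, in particular $S'$ is pure-injective over $R/J(R)$; by Lemma~\ref{lemma:pure-injectivequotient rings}, $S'$ is therefore pure-injective as an $R$-module. Being both pure in $M$ and pure-injective, $S'$ is a direct summand, say $M=S'\oplus N$, and $S'$ is fully invariant by hypothesis; this is $(3)$.

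For $(3)\Rightarrow(1)$, take simple submodules $A\cong B$ of $M$ with $B\ds M$. Then $B$ is a simple summand, so $B\subseteq S'$. Composing a projection $M\to B$ with an isomorphism $B\to A$ and the inclusion $A\hookrightarrow M$ produces an endomorphism $f$ of $M$ with $f(B)=A$, whence $A=f(B)\subseteq f(S')\subseteq S'$ by full invariance. Since $S'$ is semisimple, the simple submodule $A$ is a direct summand of $S'$, and therefore $A\ds M$ via $M=S'\oplus N$. The step I expect to be the main obstacle is $(2)\Rightarrow(3)$ — more precisely, recognizing that the purity of the semisimple module $S'$ in $M$ can be converted into a splitting by first passing to $R/J(R)$ to obtain pure-injectivity of $S'$ over $R$; once that observation is in place, the remaining verifications are routine.
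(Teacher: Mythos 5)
Your proposal is correct and follows essentially the same route as the paper: full invariance plus purity of $S'$ via finitely generated summands and direct limits for $(1)\Rightarrow(2)$, the passage to $R/J(R)$ and Lemma \ref{lemma:pure-injectivequotient rings} to get pure-injectivity and hence splitting for $(2)\Rightarrow(3)$, and full invariance plus semisimplicity of $S'$ for $(3)\Rightarrow(1)$. The only difference is that you prove the full-invariance and the endomorphism step directly where the paper cites \cite[Lemma 2.4(2)]{sdi}; this is immaterial.
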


\begin{proof}
$(1) \Rightarrow (2)$  By \cite[Lemma 2.4(2)]{sdi}, $S'$ is a fully invariant submodule of $M$. Let $S'=\oplus_{i \in I}V_i$, where $V_i$ are simple for each $i \in I$. Then for each finite subset $F \subseteq I$, $N_F=\oplus_{i \in F}V_i$ is a direct summand of $M$ by \cite[Lemma 2.4(1)]{sdi}, and so $N_F$ is a pure submodule of $M$. By \cite[4.84.(a)]{Lam:LecturesOnModulesAndRings} direct limit of pure submodules is pure, and so $S'=\oplus_{i \in I}V_i =\lim_{F}{N_F}$ is a pure submodule of $M$. This proves $(2).$

$(2) \Rightarrow (3)$ Since $R$ is a semilocal ring, $\frac{R}{J(R)}$ is semisimple. Thus every right $\frac{R}{J(R)}$-module is pure-injective. As $S'$ is semisimple, $S'.J(R)=0$. Thus $S'$ is a pure-injective right $R$-module by Lemma \ref{lemma:pure-injectivequotient rings}. Being pure and pure-injective implies that $S' \ds M$.

$(3) \Rightarrow (1)$ Let $A$ and $B$ be two simple submodules of $M$ such that $A\cong B$ and $A \ds M$. Then $A \subseteq S'$. Since $S'$ is a fully invariant submodule of $M$, $B \subseteq S'$ and so $B \ds M$. Hence $M$ is simple-direct-injective.
\end{proof}

Simple submodules of nonsingular modules are projective. Thus nonsingular right modules are simple-direct-projective over any ring (see \cite[Example 2.5]{sdp1}). The corresponding result for simple-direct-injective modules follows.

\begin{proposition}\label{prop:nonsingularsdi} Let $R$ be a ring. The following statements are equivalent.
\begin{enumerate}
  \item[(1)] Every projective simple right module is injective.
  \item[(2)] Every nonsingular right module is simple-direct-injective.
\end{enumerate}
\end{proposition}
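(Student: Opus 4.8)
The plan is to prove both implications by hand, leaning on two elementary facts about simple modules. First, if $\mathfrak m$ is a maximal right ideal of $R$, then $R/\mathfrak m$ is singular exactly when $\mathfrak m$ is essential in $R_R$ (if $\mathfrak m$ is essential then $\bar 1\in Z(R/\mathfrak m)$; if it is not, then by maximality $\mathfrak m$ is a direct summand of $R_R$, so $R/\mathfrak m$ is projective). Second, a projective simple right module is automatically nonsingular: since such an $S$ is cyclic and projective, the canonical epimorphism $R_R\to S$ splits, so $S\cong eR$ for an idempotent $e\neq 0$; then $\operatorname{ann}_r(e)=(1-e)R$ is a proper direct summand of $R_R$, hence not essential, so $e\notin Z(R_R)$ and $Z(eR)=eR\cap Z(R_R)=0$ by simplicity of $eR$.

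For $(1)\Rightarrow(2)$ I would take a nonsingular right module $M$ together with simple submodules $A\cong B$ of $M$ with $B\ds M$, and show directly that $A\ds M$. Since $A\subseteq M$ and $Z(M)=0$ we get $Z(A)=0$; writing $A\cong R/\mathfrak m$, essentiality of $\mathfrak m$ would put $\bar 1$ in $Z(A)$, so $\mathfrak m$ is not essential and, being maximal, is a direct summand of $R_R$. Hence $A$ is projective, so injective by (1), so a direct summand of $M$. (This argument in fact shows the stronger statement that under (1) every simple submodule of a nonsingular module splits off; the hypotheses $A\cong B$ and $B\ds M$ are not even used.)

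For $(2)\Rightarrow(1)$, let $S$ be a projective simple right module; by the second fact above $S$ is nonsingular, and therefore its injective hull $E(S)$ is nonsingular as well (the singular submodule of $E(S)$ meets the essential submodule $S$ in $Z(S)=0$), so $M:=S\oplus E(S)$ is nonsingular. Now $S\oplus 0$ is a simple direct summand of $M$, while $0\oplus S\subseteq 0\oplus E(S)$ is a simple submodule isomorphic to it; by (2), $0\oplus S$ is a direct summand of $M$. Composing the associated retraction $q:M\to 0\oplus S$ with the inclusion $0\oplus E(S)\hookrightarrow M$ produces a retraction $E(S)\to S$ (it restricts to the identity on $S$), so $S$ is a direct summand of the injective module $E(S)$ and hence injective.

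I expect the only delicate points to be the two structural facts quoted at the outset — especially that a projective simple module is nonsingular, which hinges on the annihilator of a nonzero idempotent failing to be essential — together with the standard observation that the injective hull of a nonsingular module is nonsingular. Once these are in place, everything else is routine bookkeeping with retractions and direct summands, and no deeper machinery (injectivity tests, essentiality arguments beyond the above, or results from later sections) should be needed.
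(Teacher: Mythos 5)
Your proof is correct and follows essentially the same route as the paper: for $(1)\Rightarrow(2)$ you show simple submodules of a nonsingular module are nonsingular, hence projective, hence injective, hence summands (the paper states this in one line), and for $(2)\Rightarrow(1)$ you apply simple-direct-injectivity to $S\oplus E(S)$ exactly as the paper does. The extra details you supply (why nonsingular simples are projective, why projective simples and $E(S)$ are nonsingular) are accurate and merely make explicit what the paper leaves implicit.
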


\begin{proof} $(1) \Rightarrow (2)$ Nonsingular simple right modules are projective, and so injective by $(1)$. Thus $(2)$ follows.

$(2) \Rightarrow (1)$ Let $S$ be projective simple right module. Then $E(S)$ and $S\oplus E(S)$ are nonsingular, and so  $S\oplus E(S)$ is simple-direct-injective by $(2)$. Since $S\oplus 0 \cong 0 \oplus S$ and $S\oplus 0 \ds S \oplus E(S)$, $S \ds E(S)$. Hence $S$ is injective.
\end{proof}

\begin{corollary}
Let $R$ be a commutative ring. Then every nonsingular module is simple-direct-injective.
\end{corollary}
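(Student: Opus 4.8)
The plan is to reduce to Proposition~\ref{prop:nonsingularsdi}. Since $R$ is commutative, right $R$-modules are just $R$-modules, so by that proposition it suffices to prove that every projective simple $R$-module is injective; once this is established, Proposition~\ref{prop:nonsingularsdi} immediately gives that every nonsingular $R$-module is simple-direct-injective, which is the assertion.

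So let $S$ be a projective simple $R$-module. Being simple, $S \cong R/\mathfrak{m}$ for a maximal ideal $\mathfrak{m}$, and being projective the canonical epimorphism $R \to R/\mathfrak{m}$ splits. Hence $R = \mathfrak{m} \oplus S'$ as $R$-modules, with $S' \cong S$. Writing $1 = f + e$ with $f \in \mathfrak{m}$ and $e \in S'$, a routine check shows that $e$ is idempotent, $S' = eR$ and $\mathfrak{m} = (1-e)R$; thus $R$ is the ring direct product $(1-e)R \times eR$, and the factor $eR$ (with identity $e$) is isomorphic to the field $R/\mathfrak{m}$. Note also that $\mathfrak{m}$ annihilates $S'$.

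It remains to show that $S' \cong S$ is injective over $R$. For any $R$-module $M$, every $R$-homomorphism $M \to S'$ kills $M\mathfrak{m}$ and therefore factors through $M/M\mathfrak{m} \cong eM$, which gives a natural identification $\Hom_R(M,S') \cong \Hom_{eR}(eM,S')$. Since $\mathfrak{m} = (1-e)R$ is a direct summand of $R$, it is flat, so the functor $e(-) \cong (-)\otimes_R eR$ is exact. Hence an inclusion $A \hookrightarrow B$ of $R$-modules induces an inclusion $eA \hookrightarrow eB$ of $eR$-vector spaces; as $eR$ is a field, $S'$ is injective over $eR$, so $\Hom_{eR}(eB,S') \to \Hom_{eR}(eA,S')$ is surjective, and by the identification above $\Hom_R(B,S') \to \Hom_R(A,S')$ is surjective. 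Therefore $S'$ is injective over $R$, and the proof is complete via Proposition~\ref{prop:nonsingularsdi}.

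The argument is brief and there is no genuine obstacle; the only point deserving a little care is the compatibility of $\Hom(-,S')$ with the factor ring $eR$ together with exactness of $e(-)$, both of which are immediate from $R = (1-e)R \times eR$. Equivalently, one may simply invoke the standard fact that a module over a direct factor $R_1$ of a finite ring product $R = R_1 \times R_2$ is injective over $R$ if and only if it is injective over $R_1$, applied to the field factor $R_1 = eR$.
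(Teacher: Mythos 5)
Your proof is correct, and it follows the paper's reduction exactly: both you and the paper invoke Proposition~\ref{prop:nonsingularsdi} to reduce the corollary to showing that every projective simple module over a commutative ring is injective. The difference lies entirely in how that key fact is established. The paper observes that a projective module is flat and then cites Ware's result that a flat simple module over a commutative ring is injective, whereas you prove the fact from scratch: projectivity of $S\cong R/\mathfrak{m}$ splits the quotient map $R\to R/\mathfrak{m}$, producing an idempotent $e$ with $R=(1-e)R\times eR$ and $eR\cong R/\mathfrak{m}$ a field, after which injectivity of $S$ over $R$ follows from the standard behaviour of injectivity over a direct factor of a ring product. Your idempotent computation ($e=ef+e^2$ with $ef\in\mathfrak{m}$, $e^2\in S'$, so $e^2=e$ by uniqueness of the decomposition) and the identification $\Hom_R(M,S')\cong\Hom_{eR}(eM,S')$ together with exactness of $M\mapsto eM$ are all sound. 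What your route buys is self-containedness --- no appeal to flatness or to the literature --- and it makes transparent the structural reason a projective simple module is injective here (it splits off the ring as a field direct factor); what the paper's route buys is brevity and a statement (flat simple $\Rightarrow$ injective) that is strictly more general than the projective case you need.
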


\begin{proof} Let $S$ be a projective simple module. Since $S$ is projective, it is flat. Then $S$ is injective by \cite[Lemma 2.6.]{ware}. Now, the conclusion follows by Proposition \ref{prop:nonsingularsdi}.
\end{proof}

Let $M$ be a right module and $N \subseteq M$. $N$ is called coneat submodule of $M$ if for every simple right module $S$, any homomorphism $f:N \to S$ can be extended to a homomorphism $g: M \to S$ (see, \cite{coneatBD}, \cite{coneatC}). A right module $M$ is called absolutely-coneat if $M$ is coneat in every module containing it as a submodule, equivalently $M$ is coneat in $E(M)$. It is easy to see that absolutely coneat modules are closed under direct summands, and that a simple right module is absolutely coneat iff it is injective.

\begin{proposition} Absolutely-coneat right modules are simple-direct-injective.
\end{proposition}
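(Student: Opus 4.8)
The plan is to reduce the statement to the two elementary observations recorded in the paragraph preceding it: that the class of absolutely-coneat modules is closed under direct summands, and that a simple right module is absolutely-coneat precisely when it is injective. Granting these, the argument is immediate. Let $M$ be an absolutely-coneat right module, and let $A$ and $B$ be simple submodules of $M$ with $A \cong B$ and $B \ds M$. Since $B$ is a direct summand of the absolutely-coneat module $M$, $B$ is itself absolutely-coneat; being simple, $B$ is therefore injective. As $A \cong B$, the module $A$ is injective as well, and an injective submodule of any module splits off, so $A \ds M$. This is exactly the defining condition for $M$ to be simple-direct-injective.

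It remains to see why the two cited facts hold, and I would include a one-line justification of each. For closure under summands: if $M = M_1 \oplus M_2$ is absolutely-coneat, then, using $E(M) = E(M_1) \oplus E(M_2)$, any homomorphism $f : M_1 \to S$ with $S$ simple extends to $f \oplus 0 : M \to S$, hence to some $g : E(M_1) \oplus E(M_2) \to S$ because $M$ is coneat in $E(M)$; the restriction of $g$ to $E(M_1)$ is then an extension of $f$, so $M_1$ is coneat in $E(M_1)$, i.e. absolutely-coneat. For the characterization of simple absolutely-coneat modules: if $S$ is simple and coneat in $E(S)$, the identity map of $S$ extends to a retraction $E(S) \to S$, whence $S \ds E(S)$; since $S$ is essential in $E(S)$ this forces $S = E(S)$, so $S$ is injective, and the converse is clear.

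I do not anticipate any real obstacle. Once the closure property and the simple-module characterization are in hand, the proof is just the chain: a direct summand of an absolutely-coneat module is absolutely-coneat; a simple absolutely-coneat module is injective; an isomorphic copy of an injective module is injective; and an injective submodule is a direct summand. The only step worth double-checking in the write-up is that injectivity is transported along the isomorphism $A \cong B$, which is routine.
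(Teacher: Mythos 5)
Your argument is correct and follows exactly the paper's own proof: a simple summand of an absolutely-coneat module is absolutely-coneat, hence injective, and injectivity transfers along the isomorphism to make the other copy a summand. Your added justifications of the two auxiliary facts (closure under summands and the characterization of simple absolutely-coneat modules) are also correct; the paper merely asserts these in the preceding paragraph.
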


\begin{proof} Let $M$ be an absolutely-coneat right module. Suppose $A$ and $B$ are simple submodules of $M$ with $A\cong B$ and $B \ds M$. Then $B$ is absolutely-coneat as a direct summand of $M$. Thus $B$ is injective, and so $A$ is injective too. Then $A \ds M$, and hence $M$ is simple-direct-injective.
\end{proof}






\section{When simple-direct-injective (projective) modules are simple-direct-projective (injective)}

In \cite[Theorem 3.4.]{sdi}, the authors characterize the rings over which simple-direct-injective right modules are $C3$. They prove that these rings are exactly the Artinian serial rings with $J^2(R)=0$. In \cite[Theorem 4.9.]{sdp1}, the authors prove that every simple-direct-injective right $R$-module is $D3$ iff every simple-direct-projective right $R$-module is $C3$ iff $R$ is uniserial with $J^2(R)=0$.

At this point it is natural to consider the rings whose simple-direct-injective ( resp. projective) right modules are simple-direct-projective (resp. injective). Since $C3$ and $D3$ modules are simple-direct-injective and simple-direct-projective, respectively, uniserial rings with $J^2(R)=0$ are examples of the aforementioned rings.

In this section, we prove that every simple-direct-injective right module is simple-direct-projective iff the ring is left perfect and right $H$-ring. As a consequence, we show that, commutative perfect rings are examples of such rings.  We prove that the rings whose simple-direct-projective right modules are simple-direct-injective are right max-ring. For a commutative Noetherian ring, we prove that, simple-direct-projective modules are simple-direct-injective iff simple-direct-injective modules are simple-direct-projective iff the ring is Artinian.


Recall that, a ring $R$ is called {\it right semiartinian}  if every nonzero right $R$-module has nonzero socle. A right module $M$ is called { \it semiartinian (or Loewy)} module if every nonzero factor of $M$ has a nonzero socle.
First, we give a characterization of the rings over which every simple-direct-injective right module is simple-direct-projective. We begin with the following.

\begin{proposition}\label{prop:sdisdpimpliessemilocalsemiartin} Let $R$ be a ring. Suppose every simple-direct-injective right $R$-module is simple-direct-projective. Then $R$ is semilocal and right semiartinian, i.e. $R$ is left perfect.
\end{proposition}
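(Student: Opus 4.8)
The plan is to prove the two claims "$R$ semilocal" and "$R$ right semiartinian" separately, and then invoke the standard fact that a semilocal right semiartinian ring is left perfect (a semilocal ring with $J(R)$ right $T$-nilpotent is left perfect; over a semiartinian ring $J(R)$ is right $T$-nilpotent since $R/J$-socle filtrations exist — more precisely, semilocal + right semiartinian forces the Loewy length to be finite at the level of $R/J$-modules and $J$ to be $T$-nilpotent, which is exactly the left-perfect condition). So the real content is the first two assertions.

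For "$R$ is semilocal": I would feed into the hypothesis a module that is automatically simple-direct-injective but whose simple-direct-projectivity is strong. The cleanest choice is a module with zero radical: by \cite[Remark 4.5]{sdp1} (quoted in the excerpt) every module $M$ with $\Rad(M)=0$ is simple-direct-injective, hence by hypothesis it is simple-direct-projective. But then Proposition \ref{proposition:R semilocalzeroradicalaresdi} (the equivalence "$R$ semilocal $\iff$ every right module with zero radical is simple-direct-projective") applies verbatim and gives $R$ semilocal. This step should be essentially a one-line citation once the right module is identified.

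For "$R$ is right semiartinian": here I expect the main obstacle to lie. I would argue by contradiction — suppose $R$ is not right semiartinian, so there is a nonzero right module $N$ with $\Soc(N)=0$, and in fact one may take a nonzero right module $M$ which is its own socle-zero quotient, or better, look at the class of modules with zero socle. A module $M$ with $\Soc(M)=0$ is vacuously simple-direct-injective (it has no simple submodules at all). By hypothesis such an $M$ is simple-direct-projective; but having zero socle also makes it vacuously simple-direct-projective, so that observation alone yields nothing — the subtlety is that the failure of right semiartinian must be detected through a module that genuinely has simple summands while still being simple-direct-injective for a nontrivial reason. So instead I would build, from a socle-zero module $N$, an auxiliary module such as $E(S)\oplus N$ or $S\oplus(\text{something})$ where $S$ ranges over simple modules, arrange that it is simple-direct-injective (using that $S$ is a summand whose isomorphic copies inside are again summands, e.g. because $N$ contributes no simple submodules), and then exploit simple-direct-projectivity to force a maximal submodule with socle-zero quotient to split, contradicting $\Soc(N)=0$. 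The technical heart is choosing the auxiliary module so that \emph{verifying} simple-direct-injectivity is easy (this is where zero socle of $N$ and injective hulls of simples enter) while the simple-direct-projective conclusion produces a genuine splitting inside $N$.

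I would then close by citing the Bass-type characterization: a ring is left perfect iff it is semilocal and $J(R)$ is right $T$-nilpotent (\cite[Theorem 28.4]{Anderson-Fuller:RingsandCategoriesofModules}), together with the standard fact that a semilocal right semiartinian ring has $J(R)$ right $T$-nilpotent (the descending Loewy series of $R_R$ reaches $0$ and each factor is semisimple; combined with semilocality this forces $T$-nilpotence of $J$), giving "$R$ is left perfect" and completing the proof. The one place to be careful is that "right semiartinian" alone does not give left perfect — semilocality, already established in the first step, is what upgrades it; I would state this dependence explicitly rather than asserting the implication in isolation.
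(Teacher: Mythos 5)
Your first step (semilocality via modules with zero radical and Proposition \ref{proposition:R semilocalzeroradicalaresdi}) and your closing step (semilocal plus right semiartinian implies left perfect, via Bass's theorem) are exactly the paper's, and both are fine. The gap is in the middle step, which you yourself flag as the ``technical heart'' but do not carry out; moreover, one of the two candidates you float would not work. For a simple module $S$ and a socle-zero module $N$, the module $E(S)\oplus N$ has no simple direct summands at all (its only simple submodules lie essentially inside $E(S)$), so it is \emph{vacuously} both simple-direct-injective and simple-direct-projective and detects nothing --- the same trap you correctly identified for $N$ alone.

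The construction that works is the following. If $R$ is not right semiartinian, pick a nonzero module with zero socle and replace it by a nonzero \emph{cyclic} submodule $N$; this still satisfies $\Soc(N)=0$, and, being finitely generated, $N$ now has a maximal submodule $L$, which cannot be a direct summand of $N$ (a complement would be a simple submodule of $N$). Using the semilocality already established, let $S_1,\dots,S_n$ be a complete set of representatives of the simple right $R$-modules and set $K=S_1\oplus\cdots\oplus S_n\oplus N$. Every simple submodule of $K$ projects to zero in $N$, hence lies in the semisimple part $S_1\oplus\cdots\oplus S_n$ and is therefore a direct summand of $K$; so $K$ is simple-direct-injective. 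On the other hand, $L'=S_1\oplus\cdots\oplus S_n\oplus L$ is a maximal submodule of $K$ with $\frac{K}{L'}\cong \frac{N}{L}\cong S_i\ds K$ for some $i$, while any complement of $L'$ in $K$ would be a simple submodule of $K$ and hence contained in $S_1\oplus\cdots\oplus S_n\subseteq L'$, which is impossible. So $K$ is not simple-direct-projective, giving the desired contradiction. The two points your sketch is missing are precisely (i) passing to a cyclic $N$ so that a maximal submodule exists at all (an arbitrary socle-zero module, e.g.\ $\Q$ over $\Z$, may have none), and (ii) adjoining representatives of \emph{all} simple modules, so that the simple quotient $\frac{N}{L}$, whichever isomorphism class it lands in, is automatically isomorphic to a simple direct summand of $K$.
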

\begin{proof} Every right module $M$ with $\Rad(M)=0$ is simple-direct-injective (see, \cite[Remark 4.5.]{sdp1} ). Thus, by Proposition \ref{proposition:R semilocalzeroradicalaresdi}, $R$ is semilocal. Suppose $R$ is not right semiartinian. Then there is a nonzero finitely generated right module $N$ with $\Soc(N)=0$. As the ring is semilocal, there are only finitely many, say $S_1,\,S_2,\cdots , S_n$ simple right modules up to isomorphism. Let $K=S_1 \oplus S_2 \oplus \cdots \oplus S_n \oplus N$. Then every simple submodule of $K$ is a direct summand, and so $K$ is simple-direct-injective. Let us show that $K$ is not simple-direct-projective, and get a contradiction. Let $L$ be a maximal submodule of $N$. Since $\Soc(N)=0$, $L$ is not a direct summand of $N$, and hence not a direct summand of $K$ too.
Let $L'=S_1 \oplus S_2 \oplus \cdots \oplus S_n \oplus L$. Then $L'$ is a maximal submodule of $K$, and so $\frac{K}{L'} \cong S_i \ds K$, for some $i=1,\cdots, n$. As $L$ is not a direct summand of $K$, $L'$ is not a direct summand of $K$ too. Thus $K$ is not simple-direct-projective, which is a contradiction. Therefore $R$ must be right semiartinian. Hence $R$ is left perfect by \cite[Theorem 23.20]{Lam:AFirstCourseInNoncommutativeRings}.
\end{proof}








\begin{theorem}\label{thm:charofsdisdp} The following statements are equivalent for a ring $R$.

\begin{enumerate}
\item $R$ is  left perfect and right $H$-ring.
\item Every simple-direct-injective right module is simple-direct-projective.
\end{enumerate}

\end{theorem}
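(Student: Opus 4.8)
The plan is to prove $(2)\Rightarrow(1)$ by exhibiting, when the $H$-property fails, a simple-direct-injective module that is not simple-direct-projective, and to prove $(1)\Rightarrow(2)$ by combining the structural Proposition~\ref{thm:semilocalsimpledirectinjective} with the fact that over a right $H$-ring $E(V)$ has no composition factor isomorphic to a simple $W\not\cong V$.

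\smallskip
\noindent\emph{Proof of $(2)\Rightarrow(1)$.} Every right module with zero radical is simple-direct-injective, so Proposition~\ref{prop:sdisdpimpliessemilocalsemiartin} already gives that $R$ is left perfect; in particular $R$ is semilocal and right semiartinian. Assume, for a contradiction, that $R$ is not a right $H$-ring, and fix non-isomorphic simple right modules $S_1,S_2$ with a nonzero $g\colon E(S_1)\to E(S_2)$. Since $S_1\not\cong S_2$, the submodule $g(S_1)$ of $E(S_2)$ is semisimple, hence lies in $\Soc(E(S_2))=S_2$, and cannot be $\cong S_1$; so $g(S_1)=0$, i.e.\ $S_1\subseteq\Ker(g)$. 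Also $S_2$ is essential in $E(S_2)$, so $S_2\subseteq\Image(g)$. Put $N=g^{-1}(S_2)\subseteq E(S_1)$. Then $S_1\subseteq N$, $\Soc(N)=N\cap S_1=S_1$, and $g$ restricts to an epimorphism $N\to S_2$ with kernel $\Ker(g)$. Because $R$ is right semiartinian and $\Soc(N)=S_1\not\cong S_2$, the module $N$ has no simple summand: a simple summand $U$ would satisfy $U\subseteq\Soc(N)=S_1$, forcing $N=S_1$, impossible since $N$ maps onto $S_2$. Now form the external direct sum $M=\overline{S_2}\oplus N$ with $\overline{S_2}\cong S_2$. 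The simple submodules of $M$ are exactly $\overline{S_2}$ and $S_1\subseteq N$; as $S_1$ is not a summand of $N$ it is not a summand of $M$, so $\overline{S_2}$ is the sum of all simple summands of $M$, and it is fully invariant because $\Hom(\overline{S_2},N)=0$. By Proposition~\ref{thm:semilocalsimpledirectinjective}, $M$ is simple-direct-injective. However, for $A=\overline{S_2}\oplus\Ker(g)$ we have $\frac{M}{A}\cong\frac{N}{\Ker(g)}\cong S_2\cong\overline{S_2}\ds M$, while $A$ is not a direct summand of $M$: that would force $\Ker(g)\ds N$, i.e.\ $S_2\hookrightarrow N$, contradicting $\Soc(N)=S_1$. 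Thus $M$ violates $(2)$, and $R$ must be a right $H$-ring.

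\smallskip
\noindent\emph{Proof of $(1)\Rightarrow(2)$.} Assume $R$ is left perfect and a right $H$-ring; then $R$ is semilocal, so Proposition~\ref{thm:semilocalsimpledirectinjective} applies, and right semiartinian, so every nonzero module has essential socle. Let $M$ be simple-direct-injective and write $M=S'\oplus N$ with $S'$ the semisimple, fully invariant sum of all simple summands of $M$ and $N$ having no simple summand. Take submodules $A,B\subseteq M$ with $B$ simple and $\frac{M}{A}\cong B\ds M$; then $B\subseteq S'$ and $A$ is maximal in $M$. If $S'\not\subseteq A$, then $A\cap S'$ is maximal in $S'$, so $S'=(A\cap S')\oplus V$ with $V$ simple, $A+S'=M$ and $A\cap V=0$, whence $M=A\oplus V$ and $A\ds M$. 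It remains to exclude the case $S'\subseteq A$. There $A/S'$ is a maximal submodule of $M/S'\cong N$ with $N/(A/S')\cong\frac{M}{A}\cong B$, so $B$ is a quotient of $N$; and full invariance of $S'$ gives $\Hom(S',N)=0$, hence $\Hom(B,N)=0$, i.e.\ $\Soc(N)$ contains no copy of $B$. Now suppose $f\colon N\to B$ were nonzero, hence surjective. The composite $N\xrightarrow{f}B\hookrightarrow E(B)$ extends along $N\hookrightarrow E(N)=E(\Soc(N))$ to a nonzero $\Phi\colon E(\Soc(N))\to E(B)$. Write $\Soc(N)=\bigoplus_i V_i$ with each $V_i\not\cong B$. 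The right-$H$ hypothesis gives $\Hom(E(V_i),E(B))=0$, equivalently $E(V_i)$ has no composition factor $\cong B$; from this one deduces that $E(\Soc(N))$ itself has no composition factor $\cong B$, so $\Phi=0$, a contradiction. Hence $S'\subseteq A$ is impossible, $A\ds M$ in all cases, and $M$ is simple-direct-projective.

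\smallskip
When $\Soc(N)$ is finitely generated the last deduction is immediate, since then $E(\Soc(N))=\bigoplus_iE(V_i)$ and $\Hom\bigl(\bigoplus_iE(V_i),E(B)\bigr)=\prod_i\Hom(E(V_i),E(B))=0$. The main obstacle is to pass from this case to arbitrary $\Soc(N)$: a priori $\bigoplus_iE(V_i)$ need not be injective, so $E(\Soc(N))$ could acquire new composition factors. Handling this is precisely where one must use that $R$ is left perfect rather than merely semilocal --- e.g.\ by reducing the surjection $N\twoheadrightarrow B$ to one from a cyclic submodule $cR\subseteq N$ and exploiting that $R$ is semiperfect with right $T$-nilpotent Jacobson radical to control the relevant socles. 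This composition-factor bookkeeping over the (possibly infinite) family $\{V_i\}$ is the technical heart of $(1)\Rightarrow(2)$.
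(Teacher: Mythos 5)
Your proof of $(2)\Rightarrow(1)$ is correct and essentially coincides with the paper's: left perfectness comes from Proposition \ref{prop:sdisdpimpliessemilocalsemiartin}, and when the $H$-condition fails the paper likewise builds $B\oplus S_2$ with $B\subseteq E(S_1)$ and $B/\Ker(f)\cong S_2$, which is exactly your $M=\overline{S_2}\oplus g^{-1}(S_2)$. Your verification that this module is simple-direct-injective (via $\Soc(g^{-1}(S_2))=S_1$, the semiartinian argument excluding simple summands, and full invariance of $\overline{S_2}$) is in fact more careful than the paper's, which declares it ``clear''.

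The direction $(1)\Rightarrow(2)$ is where the proposal is incomplete, and you say so yourself. After reducing to a surjection $N\twoheadrightarrow B$ with $\Soc(N)=\bigoplus_{i}V_i$ essential and every $V_i\not\cong B$, everything hinges on $\Hom\bigl(E(\Soc(N)),E(B)\bigr)=0$, and you only establish this when $\Soc(N)$ is finitely generated; the closing paragraph is a description of the difficulty, not a proof. That is a genuine gap: the whole content of the implication sits there. For comparison, the paper's proof (same strategy, with $M=A\oplus B$ in its notation) closes this point in one line by writing the injective hull of the complement as $\bigoplus_{i\in I}E(U_i)$, so that a nonzero map out of it is nonzero on some single $E(U_j)$ and the $H$-hypothesis applies directly. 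That identity is precisely the assertion you are worried about: the internal sum $\bigoplus_iE(U_i)$ is always an essential submodule of the hull, but saying it \emph{equals} the hull amounts to saying this direct sum of injectives is injective, which is not automatic for infinite $I$ over a ring that is only left perfect. So to finish your argument you must either justify that identity in this setting (this is where left perfectness, beyond semilocality, has to do real work) or find a different route; the cyclic-submodule reduction you gesture at does not obviously help, since cyclic modules over a left perfect ring can still have infinitely generated socle. In short: $(2)\Rightarrow(1)$ is complete and matches the paper; $(1)\Rightarrow(2)$ follows the paper's strategy but stops exactly at its crux, a point which you have correctly identified as the delicate one.
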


\begin{proof}$(1) \Rightarrow (2)$ Let $M$ be a simple-direct-injective module. Let $A$ be the sum of all simple summands of $M$. Then $A$ is fully invariant and $M=A\oplus B$ by Proposition \ref{thm:semilocalsimpledirectinjective}. Since $A$ is a fully invariant submodule of $M$, $\Soc (B) \subseteq \Rad (M)$ and  $\Hom(A,\, \Soc (B))=0$. By $(1)$ the ring is right semiartinian, and so $\Soc (B)$ is an essential submodule of $B$. In order to prove that $M$ is simple-direct-projective, suppose that $\frac{M}{K} \cong S \ds M$ for some simple submodule $S$ of $M$. Then as $S \ds M$, $S \subseteq A$. We claim that, $A+ K= M$. Suppose the contrary that, $A+K$ is properly contained in $M$, and let us find a contradiction. Then, by maximality of $K$, we have $A\subseteq K$. Thus from $M=A \oplus B$ and by modular law, we get $K=A \oplus K\cap B$, and $$\frac{M}{K}=\frac{A\oplus B}{K}=\frac{A\oplus B}{A \oplus K\cap B}\cong \frac{B}{K\cap B}\cong S.$$
Thus $K\cap B$ is a maximal submodule of $B$. Set $N:= K\cap B$. Let $\Soc (B)=\oplus_{i \in I} U_i$, where $U_i$ is simple for each $i \in I$.
Since $\Soc(B)$ is essential submodule of $B$, the injective hull of $B$ is $E(B)=\oplus_{i \in I} E(U_i)$. As $\frac{B}{N} \cong S$, there is an epimorphism $f:B\rightarrow S$. Let $e: S\to E(S)$ be the inclusion homomorphism. Then the homomorphism $ef$ extends to a (nonzero) homomorphism $g:E(B) \to E(S)$. Since $E(B)=\oplus_{i \in I} E(U_i)$ and $g$ is nonzero, there is a nonzero homomorphism $h: E(U_j) \to E(S)$, for some $j \in I$. Then by the right $H$-ring assumption we must have $S \cong U_j$. Thus $\Hom(A, \Soc (B))\neq 0$, which is a contradiction. Hence the case $A+K=M$ must hold. Therefore, as $A$ is semisimple, there is a simple submodule $U$ of $A$ such that $U +K=M$ and $U \cap K=0$, i.e. $K \ds M$. Hence $M$ is simple-direct-projective. This proves $(2)$.

$(2) \Rightarrow (1)$ The ring $R$ is left perfect by Proposition \ref{prop:sdisdpimpliessemilocalsemiartin}. Suppose $R$ is not right $H$-ring. Then there are nonisomorphic simple right modules $S_1$ and $S_2$ such that $\Hom (E(S_1), \, E(S_2)) \neq 0$. Let $0 \neq f: E(S_1) \to E(S_2)$, and $A=\Ker(f)$. Since $\frac{E(S_1)}{A}\cong f(E(S_1)) \subseteq E(S_2)$, there is a submodule $B \subseteq E(S_1)$ such that $\frac{B}{A}\cong S_2$. Then it is clear that $B \oplus S_2$ is a simple-direct-injective right module. On the other hand, $\frac{B \oplus S_2}{A \oplus S_2}\cong 0 \oplus S_2 \ds B\oplus S_2$. But $A \oplus S_2$ is not a direct summand of $B \oplus S_2$. Thus $B \oplus S_2$ is not  simple-direct-projective. This contradicts $(2)$. Thus $R$ must be right $H$-ring.
\end{proof}

Now, we give some consequences of Theorem \ref{thm:charofsdisdp}.

\begin{corollary}\label{cor:commperfectdsdisdp} Let $R$ be a commutative ring. The following statements are equivalent.

\begin{enumerate}
\item $R$ is a perfect ring.
\item Every simple-direct-injective module is simple-direct-projective.
\end{enumerate}
\end{corollary}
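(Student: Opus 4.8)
The plan is to reduce the statement entirely to Theorem \ref{thm:charofsdisdp}, which already characterizes the (not necessarily commutative) rings over which every simple-direct-injective right module is simple-direct-projective as exactly the left perfect right $H$-rings. For a commutative ring there is no distinction between ``left perfect'' and ``perfect'' (the Jacobson radical is right $T$-nilpotent precisely when it is left $T$-nilpotent), so only two small observations remain: that a commutative perfect ring is automatically a right $H$-ring, and that the converse direction is immediate.

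First I would prove $(2)\Rightarrow(1)$. Assume every simple-direct-injective $R$-module is simple-direct-projective. By Theorem \ref{thm:charofsdisdp}, $R$ is left perfect (and a right $H$-ring). Since $R$ is commutative, left perfect means $R/J(R)$ is semisimple and $J(R)$ is $T$-nilpotent, which is also the definition of right perfect; hence $R$ is perfect.

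Next I would prove $(1)\Rightarrow(2)$. Suppose $R$ is perfect. As recalled in the proof of Proposition \ref{prop:sdisdpimpliessemilocalsemiartin}, by \cite[Theorem 23.20]{Lam:AFirstCourseInNoncommutativeRings} a left perfect ring is semilocal and right semiartinian; in particular the commutative perfect ring $R$ is semiartinian. By \cite[Proposition 2]{camillohrings}, every commutative semiartinian ring is an $H$-ring, so $R$ is a (right) $H$-ring. Thus $R$ is left perfect and a right $H$-ring, and Theorem \ref{thm:charofsdisdp} yields that every simple-direct-injective $R$-module is simple-direct-projective.

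There is essentially no obstacle here; the only point requiring an outside input is the passage from ``commutative perfect'' to ``semiartinian'' (hence to ``$H$-ring''), which is furnished by the Bass-type characterization of left perfect rings already cited in Proposition \ref{prop:sdisdpimpliessemilocalsemiartin} together with \cite[Proposition 2]{camillohrings}. Everything else is a direct application of Theorem \ref{thm:charofsdisdp}.
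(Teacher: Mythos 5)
Your argument is correct and is essentially the paper's own proof: the forward direction passes from commutative perfect to semiartinian via \cite[Theorem 23.20]{Lam:AFirstCourseInNoncommutativeRings} and then to $H$-ring via \cite[Proposition 2]{camillohrings}, after which Theorem \ref{thm:charofsdisdp} applies in both directions, with the left/right perfect symmetry for commutative rings being immediate. No gaps.
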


\begin{proof} Commutative perfect rings are semiartinian (see, \cite[Theorem 23.20]{Lam:AFirstCourseInNoncommutativeRings}). Thus commutative perfect rings are $H$-ring by \cite[Proposition 2]{camillohrings}. Now the proof is clear by Theorem \ref{thm:charofsdisdp}.
\end{proof}



A right  Noetherian  right semiartian ring is right Artinian (see \cite{shock}). Left perfect rings are right semiartinian by \cite[Theorem 23.20]{Lam:AFirstCourseInNoncommutativeRings}. Thus the following is clear by Theorem \ref{thm:charofsdisdp}.

\begin{corollary}
Let $R$ be a right Noetherian ring. The following statements are equivalent.
\begin{enumerate}
\item $R$ is right Artinian right H-ring.

\item Every simple-direct-injective right module is simple-direct-projective.
\end{enumerate}
\end{corollary}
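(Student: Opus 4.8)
The plan is to reduce everything to Theorem \ref{thm:charofsdisdp}, which already identifies the rings satisfying $(2)$ as precisely the left perfect right $H$-rings. Once this identification is in hand, the corollary becomes the purely ring-theoretic statement that, for a right Noetherian ring $R$, being right Artinian is equivalent to being left perfect, the ``right $H$-ring'' condition being shared by both sides and hence playing no role in the reduction.

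First I would dispose of $(1)\Rightarrow(2)$: a right Artinian ring is semiprimary (Hopkins--Levitzki), hence left (and right) perfect, so $(1)$ forces $R$ to be left perfect and a right $H$-ring, and Theorem \ref{thm:charofsdisdp} yields $(2)$. For $(2)\Rightarrow(1)$, Theorem \ref{thm:charofsdisdp} gives that $R$ is left perfect and a right $H$-ring; left perfect rings are right semiartinian by \cite[Theorem 23.20]{Lam:AFirstCourseInNoncommutativeRings}, and a right Noetherian right semiartinian ring is right Artinian by \cite{shock}. Combining these, $R$ is right Artinian and a right $H$-ring, which is exactly $(1)$.

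I do not anticipate a genuine obstacle here; the proof is essentially immediate from Theorem \ref{thm:charofsdisdp} together with the two classical facts just cited. The only points needing care are to invoke the external results on the correct side---``left perfect'' and ``right perfect'' are not equivalent in general, so one must keep track of which side carries the Noetherian and semiartinian hypotheses---and to note that the right Noetherian assumption is used only in the direction $(2)\Rightarrow(1)$, where it upgrades right semiartinian to right Artinian.
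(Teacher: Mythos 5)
Your proof is correct and follows essentially the same route as the paper, which likewise deduces the corollary from Theorem \ref{thm:charofsdisdp} together with the facts that left perfect rings are right semiartinian and that right Noetherian right semiartinian rings are right Artinian. Your explicit handling of the converse direction via Hopkins--Levitzki is the only detail the paper leaves implicit.
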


By \cite[Proposition 4.21]{injectivemodules}, commutative Noetherian rings are $H$-rings.

\begin{corollary}\label{cor:comm.Noethersdiaresdp}
   Let $R$ be a commutative Noetherian ring. The following statements are equivalent.
\begin{enumerate}

\item $R$ is Artinian ring.
\item Every simple-direct-injective module is simple-direct-projective.
\end{enumerate}

\end{corollary}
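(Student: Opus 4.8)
The plan is to obtain this as a direct specialization of Theorem \ref{thm:charofsdisdp}, invoking the already-cited fact that commutative Noetherian rings are right $H$-rings. First I would recall that by \cite[Proposition 4.21]{injectivemodules}, every commutative Noetherian ring is an $H$-ring; this removes the $H$-ring condition entirely from the hypotheses of Theorem \ref{thm:charofsdisdp} in the present setting. Hence, for a commutative Noetherian ring $R$, the statement ``every simple-direct-injective module is simple-direct-projective'' is equivalent, by Theorem \ref{thm:charofsdisdp}, to ``$R$ is left perfect''. So the entire content of the corollary reduces to the assertion that a commutative Noetherian ring is left perfect if and only if it is Artinian.

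For that last equivalence I would argue as follows. If $R$ is Artinian, then it is Noetherian and left (indeed two-sided) perfect, so one direction is immediate. Conversely, suppose $R$ is commutative Noetherian and left perfect. By \cite[Theorem 23.20]{Lam:AFirstCourseInNoncommutativeRings}, a left perfect ring is right (and here, being commutative, two-sided) semiartinian; in particular $R$ is semiartinian. Then I would invoke the fact (used already in the excerpt, citing \cite{shock}) that a right Noetherian right semiartinian ring is right Artinian: since $R$ is Noetherian and semiartinian, $R$ is Artinian. This closes the loop.

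Concretely, the write-up would read: by \cite[Proposition 4.21]{injectivemodules}, $R$ is an $H$-ring. If $R$ is Artinian, then $R$ is left perfect and a right $H$-ring, so every simple-direct-injective module is simple-direct-projective by Theorem \ref{thm:charofsdisdp}. Conversely, if every simple-direct-injective module is simple-direct-projective, then $R$ is left perfect by Theorem \ref{thm:charofsdisdp} (or directly by Proposition \ref{prop:sdisdpimpliessemilocalsemiartin}), hence semiartinian by \cite[Theorem 23.20]{Lam:AFirstCourseInNoncommutativeRings}; being also Noetherian, $R$ is Artinian by \cite{shock}.

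There is essentially no obstacle here: every ingredient — Theorem \ref{thm:charofsdisdp}, the $H$-ring property of commutative Noetherian rings, the semiartinian consequence of left perfectness, and the Noetherian-plus-semiartinian implies Artinian result — is already available in the excerpt. The only point requiring a little care is to make sure the $H$-ring hypothesis of Theorem \ref{thm:charofsdisdp} is genuinely automatic here, which is exactly what \cite[Proposition 4.21]{injectivemodules} provides, so the proof is a short assembly of cited facts rather than a new argument.
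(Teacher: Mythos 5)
Your proposal is correct and follows essentially the same route as the paper, which derives this corollary from Theorem \ref{thm:charofsdisdp} together with the facts that commutative Noetherian rings are $H$-rings, left perfect rings are semiartinian, and Noetherian semiartinian rings are Artinian. The paper leaves these steps implicit (the corollary is stated without a written proof), so your write-up simply makes the intended assembly of cited facts explicit.
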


A ring $R$ is called  {\it right $V$-ring} if every simple right $R$-module is injective. By \cite[Theorem 4.1]{sdi}, $R$ is right $V$-ring iff every right $R$-module is simple-direct-injective.  A ring $R$ is called  {\it right max-ring} if every nonzero right $R$-module has a maximal submodule. Right $V$-rings are right max-rings (see \cite[Theorem 3.75]{Lam:LecturesOnModulesAndRings}). Clearly, over right $V$-rings simple-direct-projective right modules are simple-direct-injective.

Now, we consider the rings whose simple-direct-projective right modules are simple-direct-injective.

\begin{proposition}\label{prop:sdparesdithentheringMAXRING} Let $R$ be a ring. If each simple-direct-projective right $R$-module is simple-direct-injective, then $R$ is a right max-ring.
\end{proposition}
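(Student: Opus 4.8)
The plan is to prove the contrapositive: assuming every simple-direct-projective right $R$-module is simple-direct-injective, I will show that every nonzero right module has a maximal submodule. Suppose this fails. Then there is a nonzero right module $M$ with $\Rad(M)=M$ (a module has no maximal submodule precisely when it equals its own radical), and the goal is to build from $M$ a module that is simple-direct-projective but not simple-direct-injective.

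The first step is to replace $M$ by a quotient with nonzero socle. Pick $0\neq x\in M$; the cyclic module $xR$ is nonzero and finitely generated, hence has a maximal submodule $K$. Put $U=M/K$ and $S=xR/K$, so that $S$ is a simple submodule of $U$. Since the radical of a module always maps into the radical of a quotient, the image of $\Rad(M)=M$ in $U$ lies in $\Rad(U)$, and therefore $\Rad(U)=U$. Thus $U$ is a nonzero right module with $\Rad(U)=U$ --- in particular $U$ has no maximal submodule, hence no simple direct summand --- while $\Soc(U)\supseteq S\neq 0$.

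The second step is to analyse $N:=U\oplus S$. One has $\Rad(N)=\Rad(U)\oplus\Rad(S)=U\oplus 0$; since every maximal submodule of $N$ contains $\Rad(N)$ and $N/(U\oplus 0)\cong S$ is simple, $U\oplus 0$ is the unique maximal submodule of $N$, and it is a direct summand with complement $0\oplus S$. Consequently every maximal submodule of $N$ is a direct summand, so $N$ is simple-direct-projective: if $N/A\cong B\ds N$ with $B$ simple then $A$ is maximal, hence a summand. However $N$ is not simple-direct-injective. Take $A=S\oplus 0$ and $B=0\oplus S$: these are isomorphic simple submodules of $N$ with $B\ds N$, but $A$ is not a direct summand of $N$. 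Indeed, if $N=A\oplus C$ then, since $A\subseteq U\oplus 0$, the modular law gives $U\oplus 0=A\oplus(C\cap(U\oplus 0))$, so $A\cong S$ would be a direct summand of $U$, contradicting $\Rad(U)=U$. This contradicts the hypothesis, so $R$ is a right max-ring.

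The steps that need care are the passage to $U$ (one must be sure the quotient keeps $\Rad(U)=U$ yet gains a simple submodule --- this uses functoriality of $\Rad$ together with the fact that a nonzero cyclic module has a maximal submodule) and the determination of the maximal submodules of $N$ from $\Rad(N)$; after that the argument is a routine application of the modular law and of two elementary observations already recorded above: a module with $\Rad = $ itself has no simple direct summand, and a module all of whose maximal submodules are direct summands is simple-direct-projective. I do not expect a genuine obstacle beyond these reductions.
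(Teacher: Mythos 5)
Your proof is correct. The overall strategy is the same as the paper's: starting from a nonzero module $M$ with $\Rad(M)=M$, produce a module $X$ with $\Rad(X)=X$ that contains a simple submodule $S$, form $N=X\oplus S$, observe that $\Rad(N)=X\oplus 0$ is the unique maximal submodule of $N$ and is a summand (so $N$ is simple-direct-projective), and then show the internal copy of $S$ cannot be a summand (so $N$ is not simple-direct-injective). Where you differ is in the construction of $X$: the paper extends the composite $mR\to mR/K\to E(mR/K)$ to a map $g\colon M\to E(mR/K)$ and takes $X=L=g(M)$, a quotient of $M$ with \emph{essential} simple socle, whereas you simply take $X=U=M/K$, which already contains the simple $xR/K$ and still satisfies $\Rad(U)=U$ by functoriality of the radical. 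Your route avoids the injective hull entirely; the price is that $S$ need not be essential in $U$, but you correctly compensate with the modular-law argument showing that a simple summand of $N$ contained in $U\oplus 0$ would be a simple summand of $U$, which is impossible when $U=\Rad(U)$. (The paper's essential-socle version yields the non-summand conclusion by the same modular-law step, so nothing is lost.) Both arguments are complete; yours is marginally more elementary.
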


\begin{proof} Suppose the ring is not right max-ring. Then there is a nonzero right module $M$ such that $M=\Rad (M)$. Let $0\neq m \in M$, and let $K$ be a maximal submodule of $mR$. Let $h=i \pi: mR \to E(\frac{mR}{K})$, where $\pi :mR \to \frac{mR}{K}$ is the natural epimorphism and $i: \frac{mR}{K} \to E(\frac{mR}{K})$ is the inclusion homomorphism. By injectivity of $E(\frac{mR}{K})$, there is a (nonzero) homomorphism $g: M \to E \frac{mR}{K}$ which extends $h$. Let $L:=g(M)$. Since $\frac{M}{\Ker (g)}\cong L$ and $\Rad(M)=M$,   $L=\Rad(L)$. Note that $L$ has an essential socle isomorphic to $\frac{mR}{K}$. Consider the right module $N= \frac{mR}{K} \oplus L$. Then $0\oplus L$ is the unique maximal submodule of $N$ and $0\oplus L \ds N$. Thus $N$ is simple-direct-projective. On the other hand, $0 \oplus \Soc(L) \cong \frac{mR}{K} \oplus 0 \ds N$, but $0 \oplus \Soc(L)$ is not a direct summand of $N$. Therefore $N$ is not simple-direct-injective. This contradicts with our assumption that simple-direct-projective modules are simple-direct-injective. Hence $R$ must be right max-ring.
\end{proof}

A subfactor of a right module $M$, is a submodule of some factor module of $M$. The following lemma can be easily derived  from the definition of $H$-ring. We include it for an easy reference.

\begin{lemma}\label{lemma:subfactorofsimplesoverHrings}$R$ is a right $H$-ring iff for every simple right $R$-module $S$, every simple subfactor of $E(S)$ is isomorphic to $S$.
\end{lemma}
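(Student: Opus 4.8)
The plan is to prove both directions of the equivalence $R$ is a right $H$-ring $\iff$ for every simple right $R$-module $S$, every simple subfactor of $E(S)$ is isomorphic to $S$, by unwinding the definition of $H$-ring in terms of injective hulls.

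For the ``if'' direction, suppose every simple subfactor of $E(S)$ is isomorphic to $S$, and let $S_1, S_2$ be nonisomorphic simple right modules with a nonzero $f \colon E(S_1) \to E(S_2)$. First I would note that $\Image f$ is a nonzero submodule of $E(S_2)$, hence (since $E(S_2)$ is an essential extension of $S_2$, and indeed since $S_2 \hookrightarrow$ every nonzero submodule in the sense that $\Soc(E(S_2)) = S_2$ meets every nonzero submodule) $\Image f$ contains a copy of $S_2$. Pulling back through $f$, there is a submodule $B \subseteq E(S_1)$ with $f(B) \cong S_2$ and $\Ker(f|_B) = \Ker f \cap B =: A$, so that $B/A \cong S_2$ is a simple subfactor of $E(S_1)$. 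By hypothesis $S_2 \cong S_1$, a contradiction. Hence $\Hom_R(E(S_1), E(S_2)) = 0$ and $R$ is a right $H$-ring.

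For the ``only if'' direction, suppose $R$ is a right $H$-ring, fix a simple right module $S$, and let $U := B/A$ be a simple subfactor of $E(S)$, where $A \subseteq B \subseteq E(S)$. Consider the composite $g \colon B \twoheadrightarrow B/A = U \hookrightarrow E(U)$; since $E(S)$ is injective, $g$ extends to $\tilde{g} \colon E(S) \to E(U)$, which is nonzero because $g$ is nonzero. Thus $\Hom_R(E(S), E(U)) \neq 0$, and since $U$ is simple, $E(U)$ is the injective hull of the simple module $U$; the $H$-ring condition forces $S \cong U$ (else the Hom would vanish). This shows every simple subfactor of $E(S)$ is isomorphic to $S$.

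The only genuinely delicate point is the claim in the ``if'' direction that a nonzero submodule of $E(S_2)$ contains a simple subfactor isomorphic to $S_2$ — more precisely, that one can realize $S_2$ itself as a subfactor $B/A$ of $E(S_1)$. This follows because $\Soc(E(S_2)) = S_2$ is essential in $E(S_2)$, so any nonzero submodule of $E(S_2)$ meets $S_2$ nontrivially and hence contains a copy of $S_2$ as a submodule; then $B = f^{-1}(\text{that copy})$ and $A = \Ker f$ work. I expect the rest to be routine diagram-chasing with injective hulls, and since the excerpt explicitly says the lemma ``can be easily derived from the definition of $H$-ring,'' the write-up can be kept short.
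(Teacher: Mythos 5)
Your proof is correct and follows essentially the same route as the paper: for the ``only if'' direction you extend the composite $B \twoheadrightarrow B/A \hookrightarrow E(B/A)$ to all of $E(S)$ and invoke the $H$-ring condition, and for the ``if'' direction you use essentiality of $S_2$ in $E(S_2)$ to find $S_2$ inside $\Image f$ and pull it back to a simple subfactor of $E(S_1)$, exactly as the paper does. One small slip worth fixing: the extension of $g$ to $\tilde g \colon E(S) \to E(U)$ exists by injectivity of the \emph{target} $E(U)$ (applied to the inclusion $B \hookrightarrow E(S)$), not by injectivity of $E(S)$.
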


\begin{proof} Suppose $R$ is a right $H$-ring and $S$ a simple right $R$-module. Let $\frac{A}{B}$ be a simple subfactor of $E(S)$. Assume that $\frac{A}{B}$ is not isomorphic to $S$. Let $i_1: \frac{A}{B} \to \frac{E(S)}{B}$ and $i_2 : \frac{A}{B} \to E(\frac{A}{B})$ be the corresponding inclusions. Then there is a nonzero homomorphism $f: \frac{E(S)}{B} \to E(\frac{A}{B})$. Thus, $f\pi : E(S) \to E(\frac{A}{B})$ is a nonzero homomorphism, where $\pi : E(S) \to \frac{E(S)}{B}$ is the canonical epimorphism. This contradict with the assumption that $R$ is right $H$-ring. Therefore every simple subfactor of $E(S)$ is isomorphic to $S$. This proves the necessity.

Conversely let $S_1$ and $S_2$ be simple right $R$-modules and $0 \neq f \in \Hom_R (E(S_1), E(S_2))$. Then $\frac{E(S_1)}{\Ker(f)}$ has a simple subfactor isomorphic to $S_2$. Thus, by our assumption, we must have $S_1 \cong S_2$. Hence $R$ is a right $H$-ring.
\end{proof}

\begin{proposition}\label{prop:artinsdpsdi} Let $R$ be a commutative Noetherian ring. The following statements are equivalent.

\begin{enumerate}
  \item $R$ is Artinian.
  \item Every simple-direct-projective module is simple-direct-injective.
\end{enumerate}
\end{proposition}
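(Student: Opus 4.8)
The plan is to prove the two implications separately, using Proposition~\ref{prop:sdparesdithentheringMAXRING} for $(2)\Rightarrow(1)$ and the structure theory of commutative Artinian rings together with Proposition~\ref{prop:sdpoverlocalrings} for $(1)\Rightarrow(2)$.

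For $(2)\Rightarrow(1)$ I would argue contrapositively. By Proposition~\ref{prop:sdparesdithentheringMAXRING} the hypothesis $(2)$ forces $R$ to be a right max-ring, so it is enough to show that a commutative Noetherian ring which is not Artinian fails to be a max-ring, i.e. carries a nonzero module equal to its own radical. If $R$ is not Artinian then, being commutative Noetherian, it has Krull dimension at least $1$, hence admits a non-maximal prime $\mathfrak{p}$. Set $M=\operatorname{Frac}(R/\mathfrak{p})$, a nonzero $R$-module with $\operatorname{Ann}_R(M)=\mathfrak{p}$. I would check that $\mathfrak{m}M=M$ for every maximal ideal $\mathfrak{m}$ of $R$: if $\mathfrak{p}\subseteq\mathfrak{m}$, choose $s\in\mathfrak{m}\setminus\mathfrak{p}$, which acts invertibly on the field $M$, so $sM=M$; if $\mathfrak{p}\not\subseteq\mathfrak{m}$, write $1=a+b$ with $a\in\mathfrak{p}$, $b\in\mathfrak{m}$, so that $v=av+bv=bv\in\mathfrak{m}M$ for every $v\in M$. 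Since any maximal submodule $N$ of $M$ satisfies $\mathfrak{m}M\subseteq N$ for the corresponding maximal ideal $\mathfrak{m}$, we would get $M=\mathfrak{m}M\subseteq N\subsetneq M$, a contradiction. Hence $\operatorname{Rad}(M)=M$, $R$ is not a max-ring, and therefore $(2)$ fails.

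For $(1)\Rightarrow(2)$, assume $R$ is commutative Artinian and let $M$ be simple-direct-projective. Write $R=R_{1}\times\cdots\times R_{n}$ with each $R_{i}$ a commutative Artinian local ring, and correspondingly $M=M_{1}\oplus\cdots\oplus M_{n}$ with $M_{i}$ an $R_{i}$-module. Each $M_{i}$ is a direct summand of $M$, hence simple-direct-projective (over $R$, and therefore over $R_{i}$) by \cite[Proposition 2.4]{sdp1}. Since $R_{i}$ is local and perfect, Proposition~\ref{prop:sdpoverlocalrings}, together with the fact that over a perfect ring every nonzero module has a maximal submodule, yields that $M_{i}$ is semisimple or $\operatorname{Soc}(M_{i})\subseteq\operatorname{Rad}(M_{i})$. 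If $M_{i}$ is semisimple it is simple-direct-injective; if $\operatorname{Soc}(M_{i})\subseteq\operatorname{Rad}(M_{i})$ then $M_{i}$ has no simple direct summand (a simple summand would lie in $\operatorname{Soc}(M_{i})\subseteq\operatorname{Rad}(M_{i})$ and hence be zero), so $M_{i}$ is simple-direct-injective vacuously. Finally, if $A\cong B$ are simple submodules of $M$ with $B\ds M$, then $A$ and $B$, being simple $R$-modules, are annihilated by all but one of the idempotents $e_{1},\dots,e_{n}$ and thus lie in a common component $M_{i}$, with $B\ds M_{i}$; simple-direct-injectivity of $M_{i}$ gives $A\ds M_{i}\ds M$. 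Hence $M$ is simple-direct-injective.

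The real content lies in $(2)\Rightarrow(1)$, and within it in exhibiting the radical module that detects the failure of the max-ring property; the residue-field construction above is the step I expect to require the most care (one must rule out maximal submodules for all maximal ideals simultaneously). Everything else — the product decomposition, the reduction to the local perfect case, and the bookkeeping with simple submodules — is routine given the results already established. One could alternatively invoke Hamsher's characterization of commutative max-rings plus the fact that a finitely generated $T$-nilpotent ideal of a Noetherian ring is nilpotent, but the construction of $\operatorname{Frac}(R/\mathfrak{p})$ keeps the argument self-contained.
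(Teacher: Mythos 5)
Your argument is correct, but both implications are handled quite differently from the paper. For $(2)\Rightarrow(1)$ the paper also first invokes Proposition~\ref{prop:sdparesdithentheringMAXRING} to get the max-ring property, but then simply cites Hamsher's theorem that commutative Noetherian max-rings are Artinian; your construction of $\operatorname{Frac}(R/\mathfrak{p})$ for a non-maximal prime $\mathfrak{p}$, with the case split on $\mathfrak{p}\subseteq\mathfrak{m}$ versus $\mathfrak{p}+\mathfrak{m}=R$ to show $\mathfrak{m}M=M$ for every maximal $\mathfrak{m}$, is a correct self-contained replacement for the relevant direction of that citation. For $(1)\Rightarrow(2)$ the two proofs genuinely diverge: the paper takes the sum $S'$ of all simple summands of $M$, shows $S'$ is a pure, pure-injective (hence direct) summand, and then uses the commutative Noetherian $H$-ring property via Lemma~\ref{lemma:subfactorofsimplesoverHrings} to prove $S'$ is fully invariant, concluding by Proposition~\ref{thm:semilocalsimpledirectinjective}; you instead decompose $R$ as a finite product of local Artinian rings, apply the local-perfect characterization following Proposition~\ref{prop:sdpoverlocalrings} to each component $M_i$ (semisimple or $\Soc(M_i)\subseteq\Rad(M_i)$, and in the latter case the standard fact that a summand inside the radical is zero gives vacuous simple-direct-injectivity), and reassemble using that a simple module lives in exactly one component. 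Your route is arguably more elementary and avoids the $H$-ring machinery entirely, at the cost of being tied to the product decomposition; since commutative perfect rings are also finite products of local perfect rings, your argument would in fact also cover Proposition~\ref{prop:commsemilocalsdparesdi}, whereas the paper's $S'$-based argument is the one it explicitly recycles there.
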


\begin{proof} $(2) \Rightarrow (1)$ By Proposition \ref{prop:sdparesdithentheringMAXRING}, $R$ is a max-ring. Noetherian max-rings are Artinian by \cite[Theorem 1]{hamsher}.

$(1) \Rightarrow (2)$ Let $M$ be a simple-direct-projective $R$-module. Let $S'$ be the sum of simple summands of $M$. Then by the same arguments in the proof of [Proposition \ref{thm:semilocalsimpledirectinjective}, $(2)\Rightarrow (3)$] $S'$ is pure and  pure-injective submodule of $M$, and so $S' \ds M$. Let $M=S' \oplus N$. Clearly, by the construction of $S'$, $N$ has no simple and maximal summands. Now in order to prove that $M$ is simple-direct-injective, by Proposition \ref{thm:semilocalsimpledirectinjective}, it is enough to see that $S'$ is a fully invariant submodule of $M$. Suppose the contrary that there are  simple submodules $A, \, B$ of $M$ such that  $A \subseteq S'$, $B \subseteq N$ and $A \cong B$. Since $B \subseteq N$, there is a nonzero homomorphism $g: N \to E(B)$. Then for $K=\Ker(g)$, the module $\frac{N}{K}$ has a maximal submodule say $\frac{L}{K}$ by the Artinianity of $R$. Since $R$ is commutative and Noetherian,  $R$ is an $H$-ring. Thus every simple subfactor of $E(B)$ is isomorphic to $B$ by Lemma \ref{lemma:subfactorofsimplesoverHrings}. Therefore $\frac{N}{L} \cong B$. Now, $$\frac{M}{S'\oplus L}=\frac{S' \oplus N}{S' \oplus L} \cong B \cong A \ds M.$$ Then by simple-direct-projectivity of $M$, $S' \oplus L \ds M$ and, by modular law, $L\ds N$. This contradicts the fact that, $N$ has no maximal summands. Hence $S'$ is a fully invariant submodule of $M$, and so $M$ is simple-direct-injective by Proposition \ref{thm:semilocalsimpledirectinjective}. This proves $(2)$.
\end{proof}

\begin{proposition}\label{prop:commsemilocalsdparesdi} Let $R$ be a commutative semilocal ring. The following statements are equivalent.
\begin{enumerate}
  \item $R$ is perfect.
  \item Every simple-direct-projective module is simple-direct-injective.
\end{enumerate}
\end{proposition}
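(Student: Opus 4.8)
This is the semilocal analogue of Proposition~\ref{prop:artinsdpsdi}, and the plan is to recycle both implications of that proof, replacing each use of ``Artinian'' by the corresponding consequence of ``commutative perfect''. For $(1)\Rightarrow(2)$, note that a commutative perfect ring $R$ is semilocal and semiartinian by \cite[Theorem 23.20]{Lam:AFirstCourseInNoncommutativeRings}, hence a right $H$-ring by \cite[Proposition 2]{camillohrings}, and that every nonzero $R$-module has a maximal submodule by \cite[Theorem 28.4]{Anderson-Fuller:RingsandCategoriesofModules}. These are exactly the three properties of $R$ that enter the proof of the implication $(1)\Rightarrow(2)$ of Proposition~\ref{prop:artinsdpsdi}: ``Artinian'' is used there only to supply the $H$-ring property and the existence of a maximal submodule inside a nonzero homomorphic image, so that argument applies verbatim. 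Explicitly: given a simple-direct-projective module $M$, let $S'$ be the sum of its simple summands; exactly as in Proposition~\ref{prop:artinsdpsdi}, $S'$ is a pure and pure-injective submodule of $M$, so $M=S'\oplus N$ where, by the construction of $S'$, $N$ has no simple --- hence no maximal --- summand. By Proposition~\ref{thm:semilocalsimpledirectinjective} it suffices to show that $S'$ is fully invariant in $M$. If it were not, one could pick simple submodules $A\subseteq S'$ and $B\subseteq N$ with $A\cong B$; extending the inclusion $B\hookrightarrow E(B)$ along $B\subseteq N$ produces a nonzero map $g\colon N\to E(B)$, the nonzero module $N/\Ker(g)$ has a maximal submodule $L/\Ker(g)$, and $N/L$ is a simple subfactor of $E(B)$, so $N/L\cong B$ by Lemma~\ref{lemma:subfactorofsimplesoverHrings}. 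Then $M/(S'\oplus L)\cong B\cong A\ds M$, whence simple-direct-projectivity of $M$ gives $S'\oplus L\ds M$ and the modular law yields $L\ds N$, a maximal summand of $N$ --- a contradiction. Hence $S'$ is fully invariant and $M$ is simple-direct-injective by Proposition~\ref{thm:semilocalsimpledirectinjective}.

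For $(2)\Rightarrow(1)$, Proposition~\ref{prop:sdparesdithentheringMAXRING} shows that $R$ is a max-ring. Since $R$ is semilocal, $R/J(R)$ is semisimple, so $X/XJ(R)$ is a semisimple $R/J(R)$-module, and therefore $\Rad(X)=XJ(R)$ for every $R$-module $X$. Thus the max-ring condition ($\Rad(X)\neq X$ for all $X\neq 0$) says that $XJ(R)\neq X$ for every nonzero $X$, which is equivalent to $J(R)$ being T-nilpotent; combined with $R/J(R)$ being semisimple Artinian, this makes $R$ perfect by \cite[Theorem 23.20]{Lam:AFirstCourseInNoncommutativeRings}.

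The routine parts --- the purity of $S'$, the modular-law bookkeeping that yields $L\ds N$, and the standard equivalence between T-nilpotence of $J(R)$ and the implication ``$XJ(R)=X\Rightarrow X=0$'' --- are exactly as in the cited results. The one point that deserves attention is verifying that, in the proof of the implication $(1)\Rightarrow(2)$ of Proposition~\ref{prop:artinsdpsdi}, ``Artinian'' is invoked only through the $H$-ring property and the existence of a maximal submodule in a nonzero quotient; granting this, ``commutative perfect'' --- being semiartinian, hence an $H$-ring, and also a max-ring --- suffices, and commutativity is genuinely needed only to obtain the $H$-ring property. In the other direction, $(2)\Rightarrow(1)$ is purely formal and holds over an arbitrary ring.
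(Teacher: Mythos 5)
Your proposal is correct and follows essentially the same route as the paper: the paper's proof of $(1)\Rightarrow(2)$ is precisely ``commutative perfect rings are $H$-rings and max-rings, so the proof of Proposition~\ref{prop:artinsdpsdi} goes through with Artinian replaced by perfect,'' which is what you verify in detail, and $(2)\Rightarrow(1)$ is the same appeal to Proposition~\ref{prop:sdparesdithentheringMAXRING} (the paper simply cites \cite[Theorem 28.4]{Anderson-Fuller:RingsandCategoriesofModules} for ``semilocal max-ring implies perfect'' where you reprove it via T-nilpotence of $J(R)$).
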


\begin{proof} $(2) \Rightarrow (1)$ $R$ is a max-ring by Proposition \ref{prop:sdparesdithentheringMAXRING}. Semilocal max-rings are perfect by \cite[Theorem 28.4]{Anderson-Fuller:RingsandCategoriesofModules}.

$(1) \Rightarrow (2)$ Note that, commutative perfect rings are H-rings and max-rings. Now, replacing Artinian ring by perfect ring the same proof of [Proposition \ref{prop:artinsdpsdi} $(1) \Rightarrow (2)$]  holds.
\end{proof}

\begin{remark}Over a right $V$-ring all right modules, in particular, simple-direct-projective right modules are simple-direct-injective (see \cite[Theorem 4.1]{sdi}). Since commutative perfect $V$-rings are semisimple, there is a simple-direct-injective $R$-module which is not simple-direct-projective over nonsemisimple commutative $V$-rings by Corollary \ref{cor:commperfectdsdisdp}. Therefore nonsemisimple commutative $V$-rings are examples of rings such that simple-direct-projective modules are simple-direct-injective, and admit a simple-direct-injective module that is not  simple-direct-projective.
\end{remark}

Summing up, Corollary \ref{cor:commperfectdsdisdp}, Corollary \ref{cor:comm.Noethersdiaresdp}, Proposition \ref{prop:artinsdpsdi} and Proposition \ref{prop:commsemilocalsdparesdi} we obtain the following.

\begin{corollary}\label{cor:Artiniffsdisdpiffsdpsdi} Let $R$ be a commutative Noetherian ring. Then the following statements are equivalent.
\begin{enumerate}
  \item $R$ is Artinian.
  \item Every simple-direct-injective module is simple-direct-projective.
  \item Every simple-direct-projective module is simple-direct-injective.
\end{enumerate}
\end{corollary}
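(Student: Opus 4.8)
The plan is to assemble this as a direct corollary of the four results just proved, so the argument is purely organizational. The statement is an equivalence of three conditions for a commutative Noetherian ring $R$: $(1)$ $R$ is Artinian, $(2)$ every simple-direct-injective module is simple-direct-projective, $(3)$ every simple-direct-projective module is simple-direct-injective. First I would establish $(1) \Leftrightarrow (2)$: this is exactly Corollary \ref{cor:comm.Noethersdiaresdp}, which states that for a commutative Noetherian ring, $R$ is Artinian iff every simple-direct-injective module is simple-direct-projective. Then I would establish $(1) \Leftrightarrow (3)$: this is exactly Proposition \ref{prop:artinsdpsdi}, which asserts that for a commutative Noetherian ring, $R$ is Artinian iff every simple-direct-projective module is simple-direct-injective.

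Having both equivalences with the common pivot $(1)$, the three conditions are pairwise equivalent by transitivity, and no further work is needed. Concretely, $(2) \Rightarrow (1)$ by Corollary \ref{cor:comm.Noethersdiaresdp}, then $(1) \Rightarrow (3)$ by Proposition \ref{prop:artinsdpsdi}; conversely $(3) \Rightarrow (1)$ by Proposition \ref{prop:artinsdpsdi}, then $(1) \Rightarrow (2)$ by Corollary \ref{cor:comm.Noethersdiaresdp}. One could alternatively cite Corollary \ref{cor:commperfectdsdisdp} and Proposition \ref{prop:commsemilocalsdparesdi} as the paper's preamble to the corollary suggests, observing that a commutative Noetherian Artinian ring is perfect, but invoking the two Noetherian-specific results is the cleanest route and avoids having to re-derive that Artinian $\Rightarrow$ perfect in the commutative Noetherian setting (which is standard anyway).

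There is essentially no obstacle here; the only thing to be careful about is making sure the cross-references point to the right statements, since both the relevant results are phrased with ``Artinian'' as one side of the equivalence, which is precisely why they glue together without friction. The proof I would write is one or two sentences:

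\begin{proof}
By Corollary \ref{cor:comm.Noethersdiaresdp}, $(2)$ holds if and only if $R$ is Artinian, that is, $(1) \Leftrightarrow (2)$. By Proposition \ref{prop:artinsdpsdi}, $(3)$ holds if and only if $R$ is Artinian, that is, $(1) \Leftrightarrow (3)$. Combining these two equivalences yields the pairwise equivalence of $(1)$, $(2)$ and $(3)$.
\end{proof}
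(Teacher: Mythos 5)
Your proof is correct and matches the paper's intent exactly: the corollary is stated as a summing-up of the preceding results, and the two Noetherian-specific ones (Corollary \ref{cor:comm.Noethersdiaresdp} for $(1)\Leftrightarrow(2)$ and Proposition \ref{prop:artinsdpsdi} for $(1)\Leftrightarrow(3)$) already give the full equivalence by transitivity. Nothing further is needed.
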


\begin{corollary} Let $R$ be a commutative semilocal ring. Then the following statements are equivalent.
\begin{enumerate}
  \item $R$ is perfect.
  \item Every simple-direct-injective module is simple-direct-projective.
  \item Every simple-direct-projective module is simple-direct-injective.
\end{enumerate}
\end{corollary}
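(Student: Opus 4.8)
The plan is to deduce this corollary directly from the three results cited just before it: Corollary \ref{cor:commperfectdsdisdp}, Corollary \ref{cor:comm.Noethersdiaresdp}, Proposition \ref{prop:artinsdpsdi} and Proposition \ref{prop:commsemilocalsdparesdi}. Since the statement concerns a commutative \emph{semilocal} ring, I would not need the full Noetherian machinery; instead the natural route is through the semilocal-specific results, namely Corollary \ref{cor:commperfectdsdisdp} (which gives $(1)\Leftrightarrow(2)$ for any commutative ring, hence in particular for a semilocal one) and Proposition \ref{prop:commsemilocalsdparesdi} (which gives $(1)\Leftrightarrow(3)$ for a commutative semilocal ring).

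First I would observe that the equivalence $(1)\Leftrightarrow(2)$ is immediate from Corollary \ref{cor:commperfectdsdisdp}: there it is proved that for an arbitrary commutative ring, $R$ is perfect if and only if every simple-direct-injective module is simple-direct-projective, and a semilocal ring is in particular a commutative ring, so the equivalence applies verbatim. Next I would note that $(1)\Leftrightarrow(3)$ is exactly the content of Proposition \ref{prop:commsemilocalsdparesdi}, which states precisely that for a commutative semilocal ring, $R$ is perfect if and only if every simple-direct-projective module is simple-direct-injective. Chaining these two equivalences through the common condition $(1)$ yields the full three-way equivalence.

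There is essentially no obstacle here: the corollary is a purely formal consequence of two already-established characterizations that share the pivot statement ``$R$ is perfect.'' The only thing to be careful about is making sure the hypothesis classes line up—Corollary \ref{cor:commperfectdsdisdp} needs only ``commutative,'' while Proposition \ref{prop:commsemilocalsdparesdi} needs ``commutative semilocal,'' and since our $R$ is commutative semilocal it satisfies both hypotheses, so both equivalences are available simultaneously. Thus the proof reduces to a single sentence: by Corollary \ref{cor:commperfectdsdisdp}, $(1)\Leftrightarrow(2)$, and by Proposition \ref{prop:commsemilocalsdparesdi}, $(1)\Leftrightarrow(3)$; hence $(1)$, $(2)$ and $(3)$ are all equivalent.
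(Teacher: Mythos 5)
Your proof is correct and matches the paper's intent exactly: the corollary is obtained by combining Corollary \ref{cor:commperfectdsdisdp} for $(1)\Leftrightarrow(2)$ with Proposition \ref{prop:commsemilocalsdparesdi} for $(1)\Leftrightarrow(3)$, pivoting on the common condition that $R$ is perfect. Nothing further is needed.
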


\end{document}